\documentclass[reqno,10pt]{amsart}
\usepackage{verbatim}  % Needed for the "comment" environment to make LaTeX comments
\usepackage{amsthm, colonequals, wrapfig, multicol,vwcol,}
\usepackage[utf8]{inputenc}
\usepackage{comment}
\usepackage{amssymb,amsfonts}
\usepackage[style=alphabetic, backend=biber, sorting=nyt, doi=false, isbn=false,url=true, hyperref,backref,backrefstyle=none, maxnames=20, maxalphanames=20]{biblatex}
\usepackage{mathtools}
\usepackage{nicematrix}
\usepackage{enumerate}
\usepackage{mathrsfs}
\usepackage{mathtools}
\usepackage{xcolor}
\usepackage[normalem]{ulem}
\usepackage{hyperref}
\usepackage{thm-restate}
\hypersetup{
    colorlinks=true, % This is important to make the links themselves colored, not just a colored box around them
    citecolor=blue,  % Change 'blue' to your desired color (e.g., 'red', 'green', 'Violet')
    linkcolor=red,   % Optional: Sets the color for internal links (e.g., table of contents)
    urlcolor=cyan    % Optional: Sets the color for URLs
}
\usepackage{cleveref}
\usepackage{blkarray}
\usepackage{amsthm}
\theoremstyle{definition}
\newtheorem{thm}{Theorem}[section]
\renewcommand{\thethm}{%
  \ifnum\value{section}>0
    \thesection.\arabic{thm}%
  \else
    \arabic{thm}%
  \fi
}
\theoremstyle{definition}

\newtheorem{cor}[thm]{Corollary}
\newtheorem{prop}[thm]{Proposition}
\newtheorem{lem}[thm]{Lemma}

\newtheorem{defn}[thm]{Definition}

\newtheorem{rem}[thm]{Remark}
\newtheorem*{rems}{Remark}

\def\C{{\mathbb C}}
\def\Z{{\mathbb Z}}

\def\Q{{\mathbb Q}}

\def\C{{\mathbb C}}

\def\Qbar{\overline{\mathbb{Q}}}

\DeclareMathOperator{\Spec}{Spec}

\DeclareMathOperator{\univ}{univ}
\DeclareMathOperator{\proj}{Proj}

\DeclareMathOperator{\divi}{div}
\DeclareMathOperator{\Jac}{Jac}

\DeclareMathOperator{\Pic}{Pic}

\DeclareMathOperator{\tar}{tar}

\DeclareMathOperator{\trdeg}{trdeg}

\DeclareMathOperator{\ord}{ord}

\DeclareMathOperator{\tors}{tors}

\DeclareMathOperator{\amp}{amp}

\usepackage{tikz-cd}
\newcommand{\CH}{\operatorname{CH}}

\newcommand{\Cer}{\operatorname{Cer}}

\usepackage{tikz-cd} 

\hypersetup{ colorlinks=true,
linkcolor=blue
}
\begin{document}
\title{Covers of curves, Ceresa cycles, \& Unlikely intersections}
\author[T. Bhatnagar]{Tejasi Bhatnagar}\address{The Ohio State University, 100 Math Tower, 231 W 18th Ave, Columbus, OH 43210}
\email{\href{mailto:bhatnagar.62@osu.edu}{bhatnagar.62@osu.edu}}
\urladdr{\url{https://sites.google.com/view/tbhatnagar/home?authuser=0}}
\email{}
\author[S. Devadas]{Sheela Devadas}
\address{University of Maine, Neville Hall, Room 237, 
Orono, ME 04469}
\email{\href{mailto:sheela.devadas@maine.edu}{sheela.devadas@maine.edu}}
\urladdr{\url{https://sheeladevadas.github.io}}
\author[T. D'Nelly-Warady]{Toren D'Nelly-Warady}
\address{University of California, San Diego, 9500 Gilman Drive
La Jolla, CA  92093, USA}
\email{\href{mailto:tdnellywarady@ucsed.edu}{tdnellywarady@ucsd.edu}}

\author[P.~Srinivasan]{Padmavathi Srinivasan}
\address{Boston University, 665 Commonwealth Avenue, Boston, MA 02215, USA}
\email{\href{mailto:padmask@bu.edu}{padmask@bu.edu}}
\urladdr{\url{https://padmask.github.io/}}

\date{}

\begin{abstract} Fix a smooth, projective, geometrically integral curve $C$ of genus $g \geq 2$ over a characteristic zero field. We prove that the Ceresa cycle $\Cer(\widetilde{C})$ of a very general ramified cover $\widetilde{C}$  of $C$ is nontorsion in the Chow group of its Jacobian. We also show that there exist infinitely many families of ramified covers of a varying family of curves where a general point of these families corresponds to a curve with nontorsion Ceresa cycle. To illustrate this, we write down two explicit $1$-dimensional and $2$-dimensional families of genus $6$ curves where the locus of curves with torsion Ceresa cycle is Zariski closed and has positive codimension. Our strategy is to reduce the question of whether the Ceresa cycle is torsion to the question of whether a related point on the Jacobian of the curve is torsion. For this, we use the  ``relative canonical shadow" of the Ceresa cycle, which is a point in the Jacobian of the curve obtained by intersecting the Ceresa cycle with a natural correspondence arising from the covering map. We combine this with ideas from unlikely intersection theory (namely the relative Manin--Mumford theorem) to study the locus where the relative canonical shadows of the Ceresa cycle become torsion.
\end{abstract}
\maketitle

Let $K$ be a characteristic zero field. Let $C$ be a nice (i.e. smooth, projective, geometrically integral) curve over $K$ of genus $g \geq 2$. Let $J$ be the Jacobian of $C$. Let $e$ be any degree $1$ divisor on $C$, and let $i_{e} \colon C \rightarrow J$ denote the associated Abel-Jacobi embedding. The Ceresa cycle is the cycle  \[ \Cer_e(C) \colonequals [i_e(C)] - [-1]^*[i_e(C)] \in \CH_1^{\hom}(J) \]
in the Chow group of homologically trivial $1$-cycles on $J$ modulo rational equivalence. If $\Cer_e(C)$ is torsion in $\CH_1(J)$, then $(2g-2)e$ is a canonical divisor of $C$ in $\CH_0(C)$  \cite[Remark~3]{ELS}. Furthermore, if $\Cer_e(C)$ is torsion for one such choice of $e$, then it is torsion for all such $e$  \cite[Lemma~2.11]{AriJef2}. Henceforth we fix such a divisor $e$ and drop the subscript $e$ in $\Cer_e(C)$. 

In \cite{Ceresa}, Ceresa showed that $\Cer(C)$ is nontorsion modulo algebraic equivalence for a very general complex curve $C$ when $g \geq 3$. Such homologically trivial nontorsion cycles are hard to construct in higher codimensions. As a result, the Ceresa cycle has played an important role in the study of Chow groups and Griffiths groups and in the formulation of the Beilinson--Bloch conjectures \cite{Beilinson}, \cite{Bloch1}, \cite{Bloch2}. We prove the following analog of Ceresa's result. 

Let $d \geq 1$ be a positive integer, let $G$ be a transitive subgroup of $S_d$, let  $\Sigma_g$ be the genus $g$ oriented surface, let $B \subset \Sigma_g$ be a finite subset of size $r$, and let $\rho$ be a conjugacy class of homomorphisms from the fundamental group $ \pi_1(\Sigma_g \setminus B)$ to $G$.  Let $\mathcal{H}_{d,r}(g,\rho)$ (and $\mathcal{H}_{d,r}(C,\rho)$ respectively) be the Hurwitz space of degree $d$ covers of smooth genus $g$ curves (of the specific genus $g$ curve $C$ respectively) whose monodromy representation is in the conjugacy class of the representation $\rho$
(see \cite[Theorem~4]{Wewers} for the construction of $\mathcal{H}_{d,r}(g,\rho)$ and $\mathcal{H}_{d,r}(C,\rho)$).

\begin{thm}\label{thm:Galcoverofggeq2} 
Assume $g \geq 2$ and that $\rho$ corresponds to a ramified cover of $C$. Then $\Cer(\widetilde{C})$ is nontorsion in the Chow group for a very general point $[\widetilde{C} \rightarrow C]$ in $ \mathcal{H}_{d,r}(C,\rho)(\C)$. 
\end{thm}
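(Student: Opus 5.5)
The plan is to follow the strategy announced in the abstract: replace the Ceresa cycle of the cover by a point on the Jacobian of the base, and then use relative Manin--Mumford to control when that point is torsion.

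\textbf{Step 1: the relative canonical shadow.} Let $f\colon \widetilde{C}\to C$ be a cover with $[\widetilde C\to C]\in\mathcal{H}_{d,r}(C,\rho)$. The graph of $f$ gives a correspondence $\Gamma_f\subset \widetilde{C}\times C$. Pulling $\Cer(\widetilde C)$ back along $\widetilde J\times C\to\widetilde J$, or equivalently using the Abel--Jacobi correspondence, and intersecting with the cycle built from $\Gamma_f$ produces a homologically trivial $0$-cycle on $C$. Its Abel--Jacobi image is a point $P(\widetilde C\to C)\in J(\C)$, where $J=\Jac(C)$ is fixed. Since correspondences act on Chow groups, $\Cer(\widetilde C)$ torsion forces $P(\widetilde C\to C)$ torsion in $J$.

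I expect this point to have the following explicit form, up to bounded multiples:
\[
P(\widetilde C\to C) = a\,\bigl(f_*K_{\widetilde C} - c\,K_C\bigr) + b\,(\text{terms involving } f_*e),
\]
with $a,b,c$ integers. By Riemann--Hurwitz, $f_*K_{\widetilde C}$ differs from $d\,K_C$ by the pushforward of the ramification divisor. Consequently $P$ should equal, up to a fixed torsion translate and a nonzero integer multiple, the class of
\[
\sum_{j} m_j\, b_j - \Bigl(\sum_j m_j\Bigr)\,\kappa ,
\]
where the $b_j\in C$ are the branch points, the $m_j$ are positive multiplicities determined by the cycle types of $\rho$, and $\kappa$ is a fixed degree one class with $(2g-2)\kappa \sim K_C$. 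Carrying out this intersection computation precisely, and checking that the coefficients do not cancel (they should not, because the cover is ramified so some $m_j>0$), is the first key step.

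\textbf{Step 2: reduce to a torsion-locus statement.} The branch map $\mathcal{H}_{d,r}(C,\rho)\to \operatorname{Sym}^r C\setminus\Delta$ is finite étale onto its image. Hence $P$ descends to the algebraic map
\[
\phi\colon (b_1,\dots,b_r)\mapsto \sum_j m_j b_j - M\kappa
\]
from an open subset of $C^r$ into $J$, where $M=\sum_j m_j$. The locus where $\Cer(\widetilde C)$ is torsion is contained in
\[
\bigcup_{N\ge1}\phi^{-1}\bigl(J[N]\bigr),
\]
a countable union of closed subvarieties. So it suffices to show that each $\phi^{-1}(J[N])$ is a proper subvariety, i.e.\ that $\phi$ is not constant on any component. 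Then a very general point avoids all of them, which is the statement of \cref{thm:Galcoverofggeq2}.

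\textbf{Step 3: nonconstancy of $\phi$, the main obstacle.} Moving a single branch point $b_1$ along $C$ while fixing the others changes $\phi$ by $m_1$ times the Abel--Jacobi map of $C$. Since $g\ge 2$, this map is nonconstant and indeed an embedding, so $\phi$ is nonconstant. In fact $\phi$ has image of positive dimension, and the preimage of the countable set of torsion points is a countable union of proper closed subsets.

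For the fixed curve $C$ no relative Manin--Mumford is needed. The genuine difficulty is Step 1: identifying the shadow correctly with the branch divisor class and verifying that the coefficient $m_1$ is nonzero, and that the correspondence really kills torsion issues. If the exact multiplicities prove messy, the fallback is to degenerate one branch point: the shadow varies with $b_1$ through a nonzero multiple of $i_\kappa(b_1)$, and that alone suffices.
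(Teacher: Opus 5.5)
Your proof is correct, but it finishes differently from the paper.

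\textbf{Step 1.} This is exactly the input the paper imports from \cite{ELS} (\Cref{prop:rcsceresa}), and your predicted formula is right. By \Cref{lem:relcanpush},
\[
f_*(D_f)=d\bigl((2g-2)f_*R_f-\deg(R_f)K_C\bigr)=d(2g-2)\bigl(\textstyle\sum_j m_jb_j-M\kappa\bigr),
\]
with no $e$-terms and no torsion translate. Your sketch of how the correspondence produces this point is too vague to stand on its own, so you should cite it as the paper does. Also, $g\ge 2$ enters through $d(2g-2)\neq 0$; for $g=1$ this pushforward vanishes identically. It does not enter through the Abel--Jacobi map being an embedding, which already holds for $g\ge 1$.

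\textbf{Controlling torsion.} Here the routes differ. The paper sets $X=i_r(C^r)$. When $X\neq J$, it combines \Cref{lem:curvesinJacobians} with Manin--Mumford--Raynaud to show that the torsion points of $X$ lie in a proper Zariski closed subset. It uses countability of $J_{\tors}$ only when $X=J$. You observe instead that, for a \emph{very general} conclusion, nonconstancy of $\phi$ on the irreducible space of ordered branch points already suffices, because $\phi^{-1}(J_{\tors})$ is then a countable union of proper fibres. You get nonconstancy by moving a single branch point. This is valid and avoids both Manin--Mumford and \Cref{lem:curvesinJacobians}. To define $\phi$ on an open subset of $C^r$, you should first pass to the finite \'etale cover of the Hurwitz space that orders the branch points, since the $m_j$ are attached to individual points; the paper is equally informal here.

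\textbf{What each route buys.} The paper's route gives the stronger statement in the remark after its proof: for $r<g$ the Ceresa cycle is nontorsion for a \emph{general} cover, i.e.\ off a proper closed subset. This Zariski-closed control of the torsion locus is also the template for \Cref{T:CerMMgen}, \Cref{T:CerMM2} and \Cref{T:CerMM1}. There the base curve varies, the claimed conclusions are open-dense, and your countability argument cannot reach them.
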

\begin{rems}
A curve that dominates a curve with nontorsion Ceresa cycle has nontorsion Ceresa cycle \cite[Proposition~25]{CerMod}, and so if $\Cer(C)$ is nontorsion, then so is $\Cer(\widetilde{C})$ for every  $[\widetilde{C} \rightarrow C] \in \mathcal{H}_{d,r}(C,\rho)$. Nevertheless, Theorem~\ref{thm:Galcoverofggeq2} gives us the weaker conclusion that $\Cer(\widetilde{C})$ is nontorsion for a  very general $[\widetilde{C} \rightarrow C] \in \mathcal{H}_{d,r}(C,\rho)$ even when we do not know if $\Cer(C)$ itself is torsion.  Theorem~\ref{thm:Galcoverofggeq2} is also interesting when $\Cer(C)$ is torsion, such as when $C$ is hyperelliptic.
\end{rems}

\begin{rems}
    In \cite[Theorem~4.4.1]{ColPir}, Collino and Pirola extended Ceresa's result to show that $\Cer(C)$ is nontorsion for a very general point of any subvariety $\mathcal{V}$ of codimension $ < (g+2)/3$ in the moduli space $\mathcal{M}_g$ of genus $g$ curves when $g > 3$. Theorem~\ref{thm:Galcoverofggeq2} produces new examples of subvarieties (whose very general point corresponds to a curve with nontorsion Ceresa cycle) not automatically covered by Collino and Pirola's result.
    %, since the image of the locus $\mathcal{H}_{C,r}(\rho)$ of Theorem~\ref{thm:Galcoverofggeq2} under the natural source curve map to  $\mathcal{M}_{\tilde{g}}$ has codimension $3(\tilde{g}-g_{C})-r$, which can be larger than $(\tilde{g}+2)/3$. 
\end{rems}

We also show that there exist infinitely many families of ramified covers of a {\textit{varying}} family of curves where a {\textit{general}} point of these families corresponds to a curve with nontorsion Ceresa cycle. Let $\mathcal{M}_g$ and $\mathcal{M}_{g,r}$ be fine moduli spaces of nice unpointed and $r$-pointed genus $g$ curves with  full symplectic level-$5$ structure respectively. Let $\tar \colon \mathcal{H}_{d,r}(g,\rho) \rightarrow \mathcal{M}_{g,r}$ denote the target map sending a cover to its target marking the branch points. For a locally closed subvariety $Z$ of $\mathcal{M}_g$, let $Z_r \colonequals \mathcal{M}_{g,r} \times_{\mathcal{M}_g} Z$ and $f_Z \colon Z_r \rightarrow Z$ denote the natural projection map. 

\begin{thm}\label{T:CerMMgen} Assume $g \geq 2$ and that $\rho$ corresponds to a ramified cover. Let $Z$ be a locally closed integral subvariety of $\mathcal{M}_g$ defined over $\Qbar$, and let $Z_r,f_Z$ be as above. Assume $\dim Z < g$. Then as we vary over nonempty \'{e}tale open subsets $U$ of $Z$, there are infinitely many sections $s \colon U \rightarrow ({Z_r})_U$ of $f_U$ such that if  $\mathcal{H}$ is an irreducible component of $\tar^{-1}(s(U))$, 
then  $\Cer(\widetilde{C})$ is nontorsion for $[\widetilde{C}\rightarrow C]$ in an open dense subset of $\mathcal{H}(\C)$. 
\end{thm}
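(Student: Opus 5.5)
We reduce the torsion question for $\Cer(\widetilde{C})$ to a torsion question for a point on the Jacobian $J$ of the target curve $C$, and then control that point with the relative Manin--Mumford theorem.

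\textbf{Step 1: the relative canonical shadow.} For a cover $\pi\colon \widetilde{C}\to C$ with branch divisor $D$, we use the correspondence $\Gamma \subset \widetilde{J}\times C$ induced by $\pi$. Pushing $\Cer(\widetilde{C})$ through $\Gamma$ gives a degree-$0$ class on $C$, hence a point $P(\pi)\in J(\C)$. If $\Cer(\widetilde{C})$ is torsion, then $P(\pi)$ is torsion, since correspondences respect torsion. I expect a Riemann--Hurwitz-type computation to express $P(\pi)$, up to a fixed integer multiple, as
\[
 a\,[D] + b\,K_C - c\,e',
\]
with $a\neq 0$ depending only on the ramification data of $\rho$. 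Here $e'$ is a fixed degree-$1$ divisor on $C$, and $D$ is weighted by the ramification indices. The key point is that $P(\pi)$ depends only on the target $C$ and the marked branch points $s\in \mathcal{M}_{g,r}$. So $P$ descends to a morphism
\[
 \sigma\colon \mathcal{M}_{g,r}\to \mathcal{J}, \qquad \sigma(C,x_1,\dots,x_r)=\sum_i a_i x_i - (\text{fixed multiple of } K_C),
\]
where $\mathcal{J}\to \mathcal{M}_g$ is the universal Jacobian. The normalization by $e'$ can be absorbed by working with $(2g-2)$-multiples.

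\textbf{Step 2: choosing the sections.} Over $Z$, choose an étale open $U$ with a section $s\colon U\to (Z_r)_U$. Then $\sigma\circ s$ is a section of the abelian scheme $\mathcal{J}_U\to U$, and it suffices that $\sigma\circ s$ is torsion only on a proper Zariski closed subset of $U$. Indeed, every component $\mathcal{H}$ of $\tar^{-1}(s(U))$ dominates $s(U)$, since $\tar$ is finite étale onto its image. Torsion of $\Cer(\widetilde{C})$ at a point of $\mathcal{H}$ forces torsion of $\sigma(s(u))$, so the torsion locus in $\mathcal{H}$ lies in the preimage of a proper closed set.

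By the relative Manin--Mumford theorem (Gao--Habegger / Gao--Ge--Kühne type, over $\Qbar$), the torsion points of the curve or subvariety $\sigma(s(U))\subset \mathcal{J}_U$ are not Zariski dense unless $\sigma(s(U))$ is contained in a torsion coset of an abelian subscheme. It is degenerate in the sense of Gao, which requires $\dim$ of the image to be at most $g$ and the Betti map rank to be small. Since $\dim U=\dim Z<g$, the image has dimension less than $g$, which is compatible with nondegeneracy. We must therefore produce infinitely many $s$ for which the section $\sigma\circ s$ is \emph{nondegenerate}, i.e.\ its Betti map has maximal rank $2\dim U$, and is not torsion.

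The main plan is to take $s$ built from a fixed section $s_0$ by translating branch points by multisections of controlled nature. For example, take $x_1$ varying freely and the other points fixed by an auxiliary choice; the infinitely many choices come from varying an auxiliary multisection, e.g.\ points of $C$ mapping to a generic point of a fixed base. The cleaner route is to first check that the full map $\sigma\colon (Z_r)\to \mathcal{J}_Z$ is nondegenerate. This holds because, on fibres, $(x_1,\dots,x_r)\mapsto \sum a_ix_i$ is surjective onto $J$ as soon as $r\geq 1$ is used with multiplicity $g$ in varying directions. Then a generic section, in the sense of Gao's criterion that the degeneracy locus is a proper closed (or countable union) subset, inherits nondegeneracy, and there are infinitely many such sections.

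\textbf{Main obstacle.} The hardest step is the nondegeneracy/Betti-rank verification for the sections $\sigma\circ s$. This is where $\dim Z<g$ enters: Betti rank $2\dim U$ needs $\dim U\le g$, with strict inequality giving room to avoid the family being contained in a proper abelian subscheme translate. I would first try the criterion of André--Corvaja--Zannier/Gao: the Betti rank is maximal iff the Kodaira--Spencer-type derivative of the section, modulo variation of the period, spans. I would verify it by moving one branch point $x_1$ along the fibre, which contributes the Abel--Jacobi differential $a_1\,\omega(x_1)$. The sections $s$ are then chosen so that these directions, together with the moduli directions of $Z$, span a $2\dim U$-dimensional real space generically. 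Finally, nontorsion of $\sigma\circ s$ at the generic point, needed to exclude everywhere torsion, follows because a nondegenerate section cannot be identically torsion.
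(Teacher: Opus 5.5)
Your Step~1 agrees with the paper. By Lemma~\ref{lem:relcanpush}, $\varphi_*(D_\varphi)=i_r^{\vec a}(P_1,\dots,P_r)$ depends only on the target and its branch points, and Proposition~\ref{prop:rcsceresa} reduces the theorem to showing that this section of $\mathcal{A}_U\to U$ is torsion only on a proper closed subset.

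The gap is in Step~2. The hypothesis Theorem~\ref{thm:RelManMum} needs is $\overline{\Z X}=\mathcal{A}_U$ for $X=i_r(s(U))$, i.e.\ the section lies in no proper subgroup scheme. You name this condition (``contained in a torsion coset of an abelian subscheme'') but never verify it. Instead you switch to Betti-map nondegeneracy, which you only sketch (``I would first try\ldots'', ``the sections $s$ are then chosen so that\ldots'').

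Nondegeneracy plus generic non-torsion would not suffice anyway. If $X$ lies in a proper abelian subscheme $\mathcal{B}\subset\mathcal{A}_U$ of relative dimension $\dim U$ and has Betti rank $2\dim U$, its Betti image is open in the corresponding rational subtorus. Then torsion points are dense in $X$.

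Your route to nondegeneracy also has concrete problems:
\begin{itemize}
\item The fibrewise map $(x_1,\dots,x_r)\mapsto\sum a_ix_i$ has image of dimension at most $r$. Since $r$ is dictated by $\rho$ and can be $1$, this map need not be surjective onto $J$.
\item The claims that a ``generic section inherits nondegeneracy'' and that there are infinitely many such sections are asserted without argument.
\item The hypothesis that $Z$ is defined over $\Qbar$ never enters your argument, which signals that a key input is missing.
\end{itemize}

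What is missing is a mechanism producing infinitely many sections with $\overline{\Z X}=\mathcal{A}_U$. The paper gets this fibrewise, in three steps.
\begin{enumerate}
\item Take a $\Qbar$-point $z\in Z$ with curve $C$, and fix distinct $P_2,\dots,P_r\in C(\Qbar)$. The map $P\mapsto i_r(P,P_2,\dots,P_r)$ traces a translate of a nonzero multiple of an Abel--Jacobi curve. By Lemma~\ref{lem:abeliantranslate} (using $g\ge 2$), this curve lies in no translate of a proper abelian subvariety.
\item Habegger's bounded height theorem (Theorem~\ref{thm:Habeggerboundedheight}) then gives infinitely many $P\in C(\Qbar)$ with $\overline{\Z\, i_r(P,P_2,\dots,P_r)}=J$.
\item Each such point $\tilde z$ spreads out, by smoothness of $Z_r\to Z$, to a section $s$ over an \'etale neighbourhood with $s(z)=\tilde z$. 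The subgroup scheme $\overline{\Z\, i_r(s(U))}$ has fibre over $z$ equal to all of $\mathcal{A}_z$, so it equals $\mathcal{A}_U$ (Lemma~\ref{lem:secthroughpt}).
\end{enumerate}
Theorem~\ref{thm:RelManMum} then applies directly because $\dim X=\dim Z<g$, and no Betti-rank computation is required.
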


To illustrate \Cref{T:CerMMgen}, we produce an explicit $1$-dimensional family and a $2$-dimensional family of genus $6$ curves that are double covers of a varying family of genus $3$ curves where the locus  of curves with nontorsion Ceresa cycle is open and dense.  
\begin{thm}\label{T:CerMM2}
 Consider the $1$-parameter family of genus $6$ curves $C_t$ with affine equation 
 \[ y^{12} = \left( \frac{x+1}{x-1} \right)^3 \left( \frac{x+t}{x-t} \right)^4. \]
 Then $\Cer(C_t)$ is nontorsion for every $t \in \C \setminus \{0,\pm 1\}$.
\end{thm}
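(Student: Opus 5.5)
The plan is to reduce, as the abstract describes, from the Ceresa cycle of $C_t$ to a single point on an abelian variety, and then to show that this point is nontorsion for \emph{every} $t$, not just for a general one.

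\emph{Structure of the family.} The curve $C_t$ is a $\mu_{12}$-cyclic cover of $\mathbb{P}^1_x$ branched over $\pm 1,\pm t$, with local exponents $3,9,4,8 \pmod{12}$. Riemann--Hurwitz gives $2\cdot 6-2 = 12(-2) + 2\cdot 9 + 2\cdot 8$, so $g(C_t)=6$. The ramification points are explicit: there are three points over each of $\pm 1$ and four points over each of $\pm t$, all permuted by $\sigma\colon y\mapsto \zeta_{12}y$. I would take the double cover $\pi\colon C_t \to C'_t := C_t/\langle \tau\rangle$ for an involution $\tau$ in $\mu_{12}\rtimes\langle x\mapsto -x\rangle$ whose quotient has genus $3$ (this is the varying genus $3$ target). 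On $C'_t$ the hypotheses of \Cref{T:CerMMgen} hold with $Z$ a curve in $\mathcal{M}_3$, so $\dim Z = 1 < 3$.

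\emph{The shadow.} I would apply the relative canonical shadow construction to $\pi$. Torsionness of $\Cer(C_t)$ forces the shadow $\pi_*(\Cer(C_t)\cdot \Gamma_\pi)$ to be torsion in $\Jac(C'_t)$. Using $K_{C_t} = \pi^*K_{C'_t} + R_\pi$ and the fact that the fixed points of $\tau$ lie among the ramification points of $C_t\to\mathbb{P}^1$, I expect the shadow to be an explicit degree-$0$ combination of the images of these points. Linear relations come from the divisors of $y$, $x\pm 1$ and $x\pm t$. Using them, I would simplify the shadow to a class $P_t = [a]-[b]$ for a single pair of points, one lying over $\pm1$ and the other over $\pm t$. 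I would then push $P_t$ to an isogeny factor of $\Jac(C'_t)$ cut out by an eigenspace of the $\mu_{12}$-action, ideally an elliptic curve $E_t$, so that it becomes a point $Q_t\in E_t$. The reason for this choice is that the differences of points in the same fibre (over $1$, or over $t$) are torsion, whereas the "cross" difference is the only term that can survive.

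\emph{Nontorsion for every $t$.} This is the step I expect to be the main obstacle, because unlikely-intersection input such as relative Manin--Mumford only yields a Zariski closed torsion locus. To get all $t\in\C\setminus\{0,\pm1\}$, I would first try to show that the relevant factor $E_t$ has CM by $\Z[\zeta_3]$ or $\Z[i]$ coming from $\sigma^4$ or $\sigma^3$. Then $Q_t$ lies in a module over the CM ring, and torsionness of $Q_t$ becomes an algebraic equation on $t$ which can be ruled out identically: $Q_t$ is fixed by a nontrivial automorphism only if it is torsion of bounded order (dividing $2$ or $3$), and one checks directly, by computing the relevant division-point condition (e.g.\ whether $3Q_t$ or $2Q_t$ vanishes via an explicit function with prescribed divisor), that this fails for all $t$. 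If this does not work, the fallback is to exhibit $t\mapsto Q_t$ as a nonconstant map into a fixed $E$ (an isotrivial factor). The nonconstancy comes from the cross-ratio of $\pm1,\pm t$, which varies. I would then combine a degeneration at $t\to 0,\pm1,\infty$ (where $Q_t$ specializes to a point of infinite order on a torus, detected by its valuation) with the bounded-torsion constraint to exclude every value of $t$.
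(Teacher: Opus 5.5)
\textbf{There is a genuine gap, and it is exactly in the step you flag as the main obstacle.}

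Reducing to a \emph{single} point $Q_t$ on one elliptic factor $E_t$ cannot give nontorsion for every $t$. It cannot even give it for all but finitely many $t$.
\begin{itemize}
\item If $E_t$ is non-isotrivial and $Q_t$ is a non-torsion section, it is a standard fact that $Q_t$ specializes to a torsion point for infinitely many $t\in\C$.
\item If $E_t\cong E$ is isotrivial and $t\mapsto Q_t$ is nonconstant (as your fallback assumes), its image is $E$ minus finitely many points, so it contains infinitely many torsion points.
\end{itemize}
So neither the degeneration argument nor a ``bounded-torsion constraint'' can exclude all $t$. The CM route also fails on its own terms. A point in a $\Z[\zeta_3]$- or $\Z[i]$-module can be torsion of any order, so torsionness is not a single algebraic condition on $t$, and nothing forces $Q_t$ to be fixed by an automorphism. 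In this family there is no CM factor to use anyway. The quotients of $C_t$ by the order-$3$ and order-$4$ subgroups of $\mu_{12}$ are rational, and the elliptic quotient that actually appears is non-isotrivial.

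\textbf{The missing idea is an \emph{unlikely} intersection.} Ceresa-torsion must force at least two independent torsion conditions on the one-parameter family. The paper does this as follows, with $\tau(x,y)=(x,\zeta_{12}y)$ and $\sigma(x,y)=(-x,y^{-1})$.
\begin{itemize}
\item It uses the two double covers $\varphi\colon C_t\to C_t/\langle\sigma\rangle$ and $\varphi'\colon C_t\to C_t/\langle\tau\sigma\rangle$, coming from non-conjugate reflections.
\item It computes the Galois shadows $D_\varphi=10R_\varphi-2K_{C_t}$ and $D_{\varphi'}$.
\item It pushes both into the same elliptic curve $E_t=C_t/\langle\tau^6,\tau^3\sigma\rangle$, with an extra twist by $\bar\tau^2$ for the first. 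This gives sections $D_1(t)=20P_1(t)$ and $D_2(t)=20P_2(t)$, both of which are torsion whenever $\Cer(C_t)$ is.
\item Their linear independence (checked at $t=7/9$) plus Masser--Zannier makes the bad locus finite.
\item Stoll's classification of $x$-coordinate pairs that are simultaneously torsion in the Legendre family then shows this finite set is empty.
\end{itemize}

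\textbf{Other errors.}
\begin{itemize}
\item Your shadow computation is off. The involutions with genus-$3$ quotient are the reflections, and their fixed points lie over $x=0$ or $x=\infty$. These are not ramification points of $x$. The only involution whose fixed points lie among those ramification points is $y\mapsto -y$, and its quotient has genus $2$. So the pushed-forward shadows are differences of images of points over $0$ and $\infty$, not a pair of points over $\pm1$ and $\pm t$.
\item Theorem~\ref{T:CerMMgen} cannot be invoked for this particular family. It only asserts that infinitely many sections are good, not that a given one is, and only at a general point.
\end{itemize}
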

The family above was studied by Laga and Shnidman in \cite[Example~3.9]{AriJef2}, where they proved that $\Cer(C_t)$ is torsion in the Griffiths group for every $t$, and remark that it would be interesting to understand the locus of $t$ where $\Cer(C_t)$ becomes torsion in the Chow group. 

Let $B \subset \Sigma_3$ be a $2$-element subset of a genus $3$ oriented surface. Let $\rho \colon \pi_1(\Sigma_3 \setminus B) \rightarrow \Z/2\Z$ be any homomorphism that sends the generating loops around the elements of $B$ to the nontrivial element of $\Z/2\Z$, and let $\mathcal{H}_{2,2}(3)$ denote the corresponding Hurwitz space. 

 \newcommand{\thmcermmpream}{Consider the $2$-parameter family of plane quartic genus $3$ curves cut out by 
\[ f_{u,w}(X,Y,Z) \colonequals w^2(X(Y^3+Z^3)+Y^2Z^2)+(u^3+w^4)X^2YZ+w^2u^3X^4=0. \]
Let $A \colonequals [1:w:-w], A' \colonequals [1:-u:0]$ be sections of the projection $\pi_2 \colon V(f) \rightarrow \mathbb{A}^2_{(u,w)}$. Let $U$ be the image of the smooth locus of $\pi_2$, let $U \rightarrow \mathcal{M}_{3,2}$ be the moduli map corresponding to $(\pi_2,A,A')$, and let $S \subset \mathcal{M}_{3,2}$ be its image.  Let $\mathcal{H}$ be an irreducible component of $\tar^{-1}(S)$. }

\begin{restatable}{thm}{TCerMMone} \label{T:CerMM1} 
   \thmcermmpream %Then $\Cer(\widetilde{C})$ is nontorsion for an open dense subset of $t \in U(\C)$. 
   Then $\Cer(\widetilde{C})$ is nontorsion for $[\widetilde{C} \rightarrow C]$ in an open dense subset of $\mathcal{H}(\C)$.
\end{restatable}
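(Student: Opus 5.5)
The plan is to use the relative canonical shadow from the abstract to pass from the Ceresa cycle to a point on the Jacobian of the target curve. For a double cover $\pi\colon \widetilde{C}\to C$ branched at the two marked points $A, A'$, intersect $\Cer(\widetilde{C})$ with the correspondence induced by $\pi$ (graph of $\pi$ composed with Abel--Jacobi maps) and push down to $J=\Jac(C)$. This gives a point $P_{\widetilde{C}} \in J(\C)$. If $\Cer(\widetilde{C})$ is torsion in $\CH_1$, then $P_{\widetilde{C}}$ is torsion. I expect $P_{\widetilde{C}}$ to be an explicit combination of $A$, $A'$ and a canonical divisor $K_C$, of the shape
\[ P = m\bigl((g-1)(A+A') - K_C\bigr) \quad\text{or}\quad m\bigl((2g-2)A - K_C\bigr)+ n(A-A'), \]
with integers $m,n$ depending only on $g=3$ and the ramification data. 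Moreover, the construction should depend only on the image in $\mathcal{M}_{3,2}$, up to the finite choice of $\rho$. So on each component $\mathcal{H}$ of $\tar^{-1}(S)$ it factors through a section $\sigma$ of the Jacobian scheme $\mathcal{J}\to U$, given by an explicit divisor built from $A,A'$ and the canonical class. Computing this formula is the first step. It should reuse whatever general lemma the paper proves for Theorems~\ref{thm:Galcoverofggeq2} and \ref{T:CerMMgen}, specialized to $d=2$, $r=2$.

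Second, I would show that the locus $\{u\in U : \sigma(u) \text{ torsion}\}$ is not Zariski dense. By the relative Manin--Mumford theorem (Gao--Habegger / Kühne), the torsion locus of a section of an abelian scheme over a base of dimension $2$ is nondense unless a degenerate situation occurs. The degeneracy is controlled by the Betti map: the locus is dense only if the image of $\sigma$ lies in a proper subgroup scheme up to translation, or the Betti rank is deficient, i.e.\ $\operatorname{rank}\,d\beta_\sigma < 2\dim U = 4$ generically. Since $\dim U=2<g=3$, this is the setting where the relative Manin--Mumford theorem applies, as in Theorem~\ref{T:CerMMgen}. What must be checked is nondegeneracy of this particular family.

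Third, the nondegeneracy check, which I expect to be the main obstacle. My first approach would be to verify it through the criterion that the differential of the section is of maximal rank modulo the Kodaira--Spencer directions. Concretely, I would compute the image of $d\sigma$ in $H^1(C,\mathcal{O})$ via the period pairing against the holomorphic differentials $X\,d(Y/X)/\partial_Z f$, etc., of the plane quartic. I would then show that the resulting $3\times 2$ "Betti-type" matrix, combined with the Gauss--Manin derivatives of periods, has full rank at one convenient specialization of $(u,w)$.

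If this direct computation is messy, the alternative is a degeneration argument. One would find a boundary point or a one-parameter subfamily (fixing $w$ or $u$) where the Jacobian acquires a CM or product structure and the section restricts to a visibly non-torsion point. For instance, the $t$-family of Theorem~\ref{T:CerMM2} may sit inside this family, which would give non-torsion along a curve. Combining that with the facts that the torsion locus is a countable union of subvarieties and, by relative Manin--Mumford, actually a finite union, it would follow that the torsion locus is a proper Zariski closed subset of $U$.

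Finally, pulling back along $\tar\colon\mathcal{H}\to S$, which is finite and dominant onto its image, the complement of this closed set gives an open dense subset of $\mathcal{H}(\C)$ on which $P_{\widetilde{C}}$ is nontorsion. Hence $\Cer(\widetilde{C})$ is nontorsion there, which proves Theorem~\ref{T:CerMM1}.
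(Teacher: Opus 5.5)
Your first and last steps agree with the paper. By Lemma~\ref{lem:relcanpush} the pushed-forward shadow is $8(A+A')-4K_C$, which is your first guess with $m=4$. The passage from $U$ to $\mathcal{H}$ through the finite flat map $\tar$ is also the same.

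\textbf{The gap.} It lies in what you propose to verify before invoking relative Manin--Mumford. In the form proved by Gao--Habegger (Theorem~\ref{thm:RelManMum}), the only substantive hypothesis besides $\dim X=2<3$ is that the section generates the abelian scheme, $\overline{\Z X}=\mathcal{A}$. No Betti-rank condition enters. Your plan never gives a way to verify this generation condition.

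Your third step targets only the rank of the Betti map, and that is not sufficient. Suppose the shadow section lay in an abelian subscheme of $\mathcal{J}$ of relative dimension $2$. This would happen, for instance, if its component in one isogeny factor were torsion, or if two factors were isogenous with dependent components. Then Betti rank $4$ would make the Betti map a local diffeomorphism onto an open piece of a real $4$-torus, and torsion values would be \emph{dense}. The argument would then collapse whatever your period/Kodaira--Spencer computation shows. Deficient Betti rank is not what produces dense torsion; full Betti rank inside a small subgroup scheme is.

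The same problem affects your degeneration alternative. Exhibiting one fiber, or one curve, where the point is nontorsion only shows that the torsion locus is a countable union of proper closed subsets. That gives nontorsion for \emph{very general} $t$. Upgrading it to ``contained in a proper Zariski closed set'' via relative Manin--Mumford again requires $\overline{\Z X}=\mathcal{A}$.

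\textbf{How the paper fills it.} The paper uses the $S_3$-symmetry of the quartics.
\begin{enumerate}
\item The quotient maps $\psi_1,\psi_2,\psi_\tau$ give an isogeny $\mathcal{J}\sim\mathcal{E}^2\times\mathcal{E}'$.
\item Pushing the shadow forward yields sections $P_1,P_2$ of $\mathcal{E}$ and $P_\tau$ of $\mathcal{E}'$.
\item Generation of $\mathcal{E}^2\times\mathcal{E}'$ then reduces to algebraic facts checked at a single specialization of $(u,w)$ in Magma: $P_1,P_2$ are linearly independent, $P_\tau$ has infinite order, and $\mathcal{E},\mathcal{E}'$ are geometrically non-isogenous (Lemma~\ref{lem:sectionsindependent} and Corollary~\ref{cor:ZarClosDimension}).
\item One specialization suffices: if a multiple of the generic section lay in a proper abelian subscheme, the same multiple of every specialization would too.
\item Gao--Habegger applied to $X\subset\mathcal{E}^2\times\mathcal{E}'$ then finishes.
\end{enumerate}
Without some such control of the abelian subvarieties of the fibers, your computation cannot certify the hypothesis that actually matters.
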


Our results fit inside the recent body of results towards understanding the locus $\mathcal{M}_{g}^{\tors}$ inside $\mathcal{M}_g$ consisting of curves with torsion Ceresa cycle. Surprisingly little is known about $\mathcal{M}_{g}^{\tors}$, besides the fact that it has at most countably many irreducible components, and that $\mathcal{M}_{g}^{\tors}$ contains the hyperelliptic locus. (See \cite{AriJef2}, \cite{QiuZhang}, \cite{Laterveer} for some other Hurwitz strata contained in $\mathcal{M}_g^{\tors}$ for some small $g$). Ceresa's original result shows that $\mathcal{M}_{g}^{\tors} \neq \mathcal{M}_g$ when $g \geq 3$, and recent results of  Gao and Zhang \cite{GaoZhang}, Hain \cite{Hain}, Kerr and Tayou \cite{KerrTayou} show that there is an open dense subvariety $\mathcal{M}_g^{\amp}$ of the coarse moduli space $\mathcal{M}_g$ of genus $g$ curves where the Ceresa cycle is torsion for at most {\textit{countably}} many\footnote{Gao and Zhang prove that all curves in $\mathcal{M}_g^{\amp}$ with torsion Ceresa cycle are defined over $\overline{\mathbb{\Q}}$ and that the Beilinson--Bloch heights of Ceresa cycles in $\mathcal{M}_g^{\amp}(\overline{\Q})$ satisfy the Northcott property, i.e., there are finitely many points of $\mathcal{M}_g^{\amp}(\overline{\Q})$ of bounded degree and whose Ceresa cycles have bounded height.} points of $\mathcal{M}_g^{\amp}(\C)$ when $g \geq 3$.  
The locus $\mathcal{M}_g^{\tors}$ is contained in the torsion locus for the Abel--Jacobi image of the Ceresa cycle, and Gao--Zhang, Hain, Kerr--Tayou use different Hodge--theoretic techniques to analyze the torsion locus for the Abel--Jacobi image of the Ceresa cycle. We do not know if the generic points of the Hurwitz spaces that we consider map to the open subset $\mathcal{M}_g^{\amp}$ in $\mathcal{M}_g$, or if the techniques in these papers can be used to produce families where the Ceresa cycle is nontorsion on an open dense locus as in our \Cref{T:CerMMgen}, \Cref{T:CerMM2} and \Cref{T:CerMM1}. To our knowledge, these theorems give some of the first examples where the Ceresa cycle is known to be generically nontorsion in the Chow group and where the torsion locus is provably Zariski closed. 

\subsection*{Overview of proof strategy} 
Our results are inspired by the recent results of Laga and Shnidman \cite[Theorem~C]{AriJef2}, where the authors show that the family of genus $3$ Picard curves has infinitely many $\overline{\mathbb{Q}}$-specializations that have torsion Ceresa cycle. Laga and Shnidman use the theory of Chow motives together with known cases of the Hodge conjecture to show that the Ceresa cycle in the family of genus $3$ Picard curves becomes torsion precisely when a section on a closely related elliptic fibration becomes torsion. We instead use intersection theory of algebraic cycles to reduce the question of whether the Ceresa cycle is torsion to a question of whether a closely related point on the Jacobian of the curve is torsion. To wit, for curves arising from covering constructions as in our theorems, intersecting the Ceresa cycle with a natural correspondence arising from the covering map produces a point on the Jacobian of the curve that is torsion when the Ceresa cycle is torsion. This point is called the ``relative canonical shadow" of the cover (Definition~\ref{D:RCShadow}) and was introduced by Ellenberg, Logan and the fourth named author in \cite{ELS} as an obstruction to triviality of the Ceresa cycle. The main new idea in this paper is to study this obstruction in families using results in unlikely intersection theory. 

More precisely, we use the Manin--Mumford--Raynaud theorem (\Cref{thm:manin-mumford}) to show that the relative canonical shadow is nontorsion for very general choices of branch points for a ramified  cover of $C$ when the genus of $C$ is at least $2$. In \Cref{T:CerMM2} and Theorem~\ref{T:CerMM1}, we study double covers that admit many {\textit{independent}} maps to elliptic curves. The family of curves in \Cref{T:CerMM2} admits a $D_{12}$ action. For proving \Cref{T:CerMM2}, we push forward the relative canonical shadow for quotients by two of the involutions to produce two sections of an elliptic fibration. If the specialization of the Ceresa cycle is torsion, then both these sections have to \textit{simultaneously} specialize to torsion points. We prove that the sections are linearly independent by specializing to a particular member in the family. We then appeal to the Masser--Zannier unlikely intersections theorem (\Cref{thm:RelManMum} and \Cref{rem:MasserZannier}) to argue that there are at most finitely points $t \in \C$ where both sections specialize to torsion points. We then use a result of Stoll \cite[Corollary~22]{Stoll} to check that this simultaneous torsion locus is actually empty. 

For Theorem~\ref{T:CerMM1}, we similarly use the $S_3$ action on the smooth plane quartics combined with a specialization argument to prove that the abelian subvariety generated by the relative canonical shadow in the family of relative Jacobians has dimension at least $3$ (\Cref{cor:ZarClosDimension}).  Since $3 > 2 = \dim \mathcal{H}$, we can apply the recent relative Manin--Mumford theorem proved by Gao and Habegger (Theorem~\ref{thm:RelManMum}) (a common generalization of the Manin--Mumford--Raynaud and Masser--Zannier results above) to conclude that the torsion locus is Zariski closed and has positive codimension in $\mathcal{H}$ (\Cref{T:CerMM1}). A similar strategy works for \Cref{T:CerMMgen}, with one key additional input provided by Habegger's ``bounded height theorem" (\Cref{thm:Habeggerboundedheight}) which implies that for a nice curve $C$ defined over $\Qbar$ embedded in its Jacobian $J$, there are infinitely many generating points of $C(\Qbar)$, that is, points $P$ such that the smallest abelian subvariety of $J$ containing $P$ is $J$ itself. This allows us to verify the necessary dimension inequality needed for applying the relative Manin--Mumford theorem. 

\subsection*{Special subvarieties of $\mathcal{M}_g$} 
In the spirit of Andr\'{e}--Oort style conjectures on Shimura varieties, one might hope that for a subvariety $X$ of $\mathcal{M}_g$, the intersection $X(\mathbb{C}) \cap {\mathcal{M}}_{g}^{\tors}$ is not Zariski dense in $X$ unless $X$ itself is special in some way.\footnote{Having a Zariski dense subset of Ceresa-torsion points is one of several competing notions of special subvarieties of $\mathcal{M}_g$, see \cite{Baldi} for other notions.} Baldi, Klingler, and Ullmo \cite[Conjecture~5.12]{BKU}, generalizing earlier work of Klingler \cite[Section~1.5]{Klingler}, have recently formulated a Zilber--Pink type conjecture for variations of  mixed Hodge structures, and this framework allows one to characterize  special subvarieties of $\mathcal{M}_g$ using a natural variation of mixed Hodge structures on $\mathcal{M}_g$ arising from the normal function of the Ceresa cycle (see \cite[Section~2]{KerrTayou}). The families in \Cref{T:CerMMgen}, Theorem~\ref{T:CerMM2} and Theorem~\ref{T:CerMM1} where the torsion locus is contained in a  Zariski closed subvariety of positive codimension (and the family in \cite[Theorem~D]{AriJef2}\footnote{See \cite[Example~4.3]{KerrTayou} for more about the connection of this family to the locus $\mathcal{M}_g^{\amp}$ mentioned above.} with a Zariski dense Ceresa torsion locus respectively) provide a natural setting to test this generalized Zilber--Pink style conjecture, if one is able to verify that the families that we study are indeed non-special (respectively special) in this generalized setting.

\section*{Acknowledgements}
This project started at the Women in Algebraic Geometry (WIAG) workshop held at the Institute of Advanced Study in 2024. We are grateful to the organisers of WIAG for giving us the opportunity to be a part of this wonderful workshop. We thank Jerson Caro,  Jordan Ellenberg, Jef Laga, Rachel Pries, Ari Shnidman, and Salim Tayou for helpful conversations. We thank ICERM for supporting the conference ``The Ceresa cycle in arithmetic and geometry" which  inspired this paper. T.B. was supported by NSF MSPRF 2503371 in 2025-2026. P.S. was supported by National Science Foundation grant DMS-2401547.

\subsection*{Notation}\label{sec:background}
By a general (resp. very general) point of an integral variety $V$, we mean a geometric point lying outside of a proper closed subvariety $W$ of $V$ (resp. a countable union of such $W$). 
A nice variety $X$ over a field $K$ is a smooth, projective, geometrically integral $K$-variety. For a nice curve $C$, we let $g(C)$ denote the genus of $C$ and $\Jac(C)$ denote the Jacobian of $C$. By a subgroup variety of an abelian variety $A$, we mean a closed and not necessarily irreducible subvariety of $A$ that is closed under addition and negation. For an abelian variety $A$ over a characteristic zero field $K$, we let $A_{\tors} \subset A(\C)$ denote the set of all geometric torsion points of $A$. For an abelian scheme $\mathcal{A} \rightarrow S$, we let $\mathcal{A}_{\tors}$ denote the union of ${\mathcal{A}_s}_{\tors}$ over all geometric points $s$ of $S$. For an abelian scheme $A$ and an integer $n$, we let $[n]$ denote the multiplication by $n$ map on $A$. 
For a point $a$ on an abelian variety $A$ and a subvariety $X$ of $A$, we let $a+X$ denote the translate of $X$ by $a$. 
For a subvariety $X$ of an abelian $S$-scheme $\mathcal{A}$, we let $\Z X$ denote the countable union $\cup_{N \in \Z} [N](X)$ in $\mathcal{A}$ and $\overline{\Z X}$ the Zariski closure of $\Z X$ in $\mathcal{A}$. 

\section{Relative canonical shadows of Ceresa cycles for separable covers}
 
In this section we let $C$ and $C'$ denote nice curves over a field $K$ with genera $g_C,g_{C'}$ respectively. Let $\varphi \colon C \rightarrow C'$ be a separable cover of curves of degree $d$ and ramification divisor $R_{\varphi}$. Let $K_{C'}$ be a canonical divisor for $C'$.
\begin{defn}\label{D:RCShadow} \cite[Definition~5.1]{ELS}  
 The relative canonical shadow of $\varphi$ is the class in $\Pic^0(C)$ of the divisor 
\[ D_{\varphi} \colonequals d (2g_{C'}-2)R_{\varphi}-\deg(R_{\varphi}) \varphi^{*}(K_{C'}) + 2(dR_{\varphi}-\varphi^*\varphi_*R_{\varphi}).\]
\end{defn}

\begin{rem}
The definition of the relative canonical shadow of a finite cover (not necessarily separable) in \cite[Definition~5.1]{ELS} is 
\begin{equation}\label{eqn:ELSformula} D_{\varphi} = -(2g_C - 2)\varphi^*(K_{C'}) - 2\varphi^*\varphi_*(K_C) + (2dg_{C'})K_C \end{equation}
When $\varphi$ is separable, using the Riemann-Hurwitz formula to rewrite $K_C$ as $\varphi^*(K_{C'})+{R_{\varphi}}$ and $2g_C-2$ as $d(2g_{C'}-2) + \deg(R_{\varphi})$ in $D_{\varphi}$, together with the fact that $\varphi_* \varphi^*$ is multiplication by $d$ on divisors on $C'$, we get the expression in Definition~\ref{D:RCShadow}.
\end{rem}

Our interest in the relative canonical shadow is due to the following proposition.
\begin{prop}\label{prop:rcsceresa}\cite[Lemma~2.3, Remark~9, Lemma~5.2]{ELS}
If $D_{\varphi}$ is infinite order, then the Ceresa cycle $\Cer_e(C)$ of $C$ has infinite order for every degree $1$ divisor $e$ on $C$.
\end{prop}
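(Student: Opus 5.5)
We argue by contrapositive: assuming $N\cdot\Cer_e(C)=0$ in $\CH_1(J)$ for some $N\geq 1$, where $J=\Jac(C)$, we show that $D_\varphi$ is torsion in $\Pic^0(C)$. By \cite[Lemma~2.11]{AriJef2}, torsionness of $\Cer_e(C)$ does not depend on $e$, so we may replace $e$ by any convenient degree $1$ divisor. The strategy is to apply to $\Cer_e(C)$ an operation built out of $\varphi$ that lands in $\CH_0^{\hom}(C)\cong \Pic^0(C)$ and produces a fixed nonzero multiple of $D_\varphi$, up to terms that are visibly torsion.

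\textbf{Step 1 (canonical shadow).} First we record the absolute statement. If $\Cer_e(C)$ is torsion, then $(2g_C-2)e-K_C$ is torsion in $\Pic^0(C)$ \cite[Remark~3]{ELS}. One way to see this is to pull $\Cer_e(C)$ back along a correspondence $C\times C\to J$, or to intersect it with a theta divisor and pull back via $i_e$. Either way the output is a $0$-cycle on $C$ that is a fixed multiple of $(2g_C-2)e-K_C$.

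\textbf{Step 2 (relative version).} Next we use the correspondence $\Gamma=(\varphi\times\varphi)^{-1}(\Delta_{C'})\subset C\times C$ (or equivalently the composite $J\to J'\to J$ of $\varphi_*$ and $\varphi^*$) together with the difference map $C\times C\to J$, $(x,y)\mapsto i_e(x)-i_e(y)$. Intersecting $\Cer_e(C)$ with the image of $\Gamma$, or pulling back the Ceresa cycle along this map and pushing forward to one factor, gives a homomorphism $\CH_1^{\hom}(J)\to \CH_0^{\hom}(C)$. The computation in \cite[Lemma~2.3, Lemma~5.2]{ELS} shows that this map sends $\Cer_e(C)$ to an explicit combination of $\varphi^*\varphi_*(K_C)$, $\varphi^*(K_{C'})$, $K_C$ and $e$. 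We would use Step 1 to replace $e$ by $K_C/(2g_C-2)$ modulo torsion, which eliminates $e$ and leaves a nonzero integer multiple of \eqref{eqn:ELSformula}. By the Remark following Definition~\ref{D:RCShadow}, that expression is $D_\varphi$ when $\varphi$ is separable.

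\textbf{Conclusion and main obstacle.} Since the map in Step 2 is a group homomorphism, $N\cdot\Cer_e(C)=0$ forces $m\,N\,D_\varphi=0$ for some integer $m\neq 0$, so $D_\varphi$ is torsion. This contradicts the hypothesis and proves the proposition. The main obstacle is the intersection computation in Step 2. One has to compute the self-intersection of $\Gamma$, including the diagonal component $\Delta_C\subset\Gamma$ and the excess contribution along the ramification, and track how $[-1]^*$ acts. We would take this computation directly from \cite[Lemma~5.2]{ELS} rather than redo it.
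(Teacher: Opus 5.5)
Your skeleton matches what the paper's citation to \cite{ELS} supplies: argue by contrapositive, use the correspondence $(\varphi\times\varphi)^*\Delta_{C'}$, eliminate $e$ with the canonical shadow $(2g_C-2)e-K_C$, and convert \eqref{eqn:ELSformula} into $D_\varphi$ via the Remark after Definition~\ref{D:RCShadow}. The gap is the homomorphism to $\CH_0^{\hom}(C)$ in Steps 1 and 2, which does not exist as you describe it.
\begin{itemize}
\item $\Cer_e(C)$ has codimension $g_C-1$ in $J$. Its pullback along any map $C\times C\to J$ therefore lies in $\CH^{g_C-1}(C\times C)$, which is $0$ once $g_C\geq 4$.
\item The image of $\Gamma$ under the difference map is a curve in $J$. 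Intersecting it with the $1$-cycle $\Cer_e(C)$ has expected dimension $2-g_C<0$.
\item In Step 1, $\Cer_e(C)\cdot\Theta$ is a $0$-cycle on $J$, and $i_e^*$ of it lands in $\CH^{g_C}(C)=0$.
\end{itemize}
So, apart from the accidental case $g_C=3$ of the pullback, none of these operations outputs a class in $\CH_0(C)$. The computation you plan to import from \cite{ELS} therefore cannot be a computation of these maps.

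The missing idea is to pass to the Gross--Kudla--Schoen modified diagonal $\Delta_e\in\CH^2(C^3)$; this is a standard repair, and it reproduces \eqref{eqn:ELSformula} exactly.
\begin{itemize}
\item $\Delta_e$ is a $1$-cycle on a threefold, so for any divisor $Z$ on $C\times C$ the class $p_{3*}(\Delta_e\cdot p_{12}^*Z)\in\CH_0(C)$ makes sense.
\item The input you need is S.-W.~Zhang's theorem: $\Delta_e$ is torsion when $\Cer_e(C)$ is torsion and $(2g_C-2)e-K_C$ is torsion. Your Step 1 supplies the second condition.
\item For $Z=\Delta_C$ this construction outputs $(2g_C-2)e-K_C$.
\item For $Z=(\varphi\times\varphi)^*\Delta_{C'}$, summing the contributions of the seven components of $\Delta_e$ gives
\[ p_{3*}(\Delta_e\cdot p_{12}^*Z)=-\varphi^*K_{C'}-2\varphi^*\varphi_*e+2dg_{C'}\,e . \]
Multiplying by $2g_C-2$ and replacing $(2g_C-2)e$ by $K_C$ modulo torsion gives exactly \eqref{eqn:ELSformula}.
\end{itemize}
The obstacle you anticipate also disappears. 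You need no self-intersection of $\Gamma$ and no local analysis at the ramification. Three components of $\Delta_e$ lie inside $p_{12}^{-1}(Z)$: $\{(x,x,x)\}$, $\{(x,x,e)\}$ and $\{(e,e,x)\}$. Their contributions come from restricting the line bundle $(\varphi\times\varphi)^*\mathcal{O}(\Delta_{C'})$ to them. On the first two this restriction is $\varphi^*T_{C'}$; on the third it is trivial.
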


\begin{rem}\label{rem:RCspecial} (See also \cite[Remark~18]{ELS}) We record a few special cases where the expression for the relative canonical shadow simplifies further.
\begin{enumerate}
\item If $\varphi$ is \'{e}tale, then $K_C = \varphi^*(K_{C'}), \varphi^*\varphi_*(K_C) = \deg(\varphi) K_C$, and so $D_{\varphi} = 0$.
\item If $g_{C'} = 0$, then $D_{\varphi} = \varphi^* \left( -(2g_C - 2)K_{C'} - 2\varphi_*(K_C) \right) = 0$ since $\Pic^0(C') = 0$.
\item\label{rem:gE1} If $g_{C'}=1$, then $K_{C'}$ is trivial, and
$ D_{\varphi} = 2dR_{\varphi}-2\varphi^*\varphi_*R_{\varphi}$. 
\item\label{rem:Gal} If $\varphi$ is a $G$-Galois cover, then $\varphi^*\varphi_*(R_{\varphi}) = \sum_{\sigma \in G} \sigma(R_{\varphi})=dR_{\varphi}$ and therefore \[D_{\varphi} = d (2g_{C'}-2)R_{\varphi}-\deg(R_{\varphi}) \varphi^{*}(K_{C'}) = (2g_C-2)R_{\varphi} - \deg(R_{\varphi}) K_C.\] 
\item\label{rem:GalE} If $g_{C'} = 1$ and $\varphi$ is Galois, then $K_{C'}$ is trivial and by the previous part $D_{\varphi} = 0$.
\end{enumerate}
So the situations where $D_{\varphi}$ is not immediately zero are covers of curves of genus at least $2$  
and non-Galois covers of curves of genus $1$ curves. 
\end{rem}

 We now prove a lemma about the push-forward of the  relative canonical shadow.
 \begin{lem}\label{lem:relcanpush}
The divisor $\varphi_*(D_{\varphi}) = d ((2g_{C'}-2) \varphi_*(R_{\varphi})-\deg(R_{\varphi}) K_{C'}).$ In particular, if $g_{C'} = 1$, then  $\varphi_*(D_{\varphi}) = 0$.
\end{lem}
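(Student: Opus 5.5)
We compute directly by applying the pushforward $\varphi_*$ term by term to the expression for $D_{\varphi}$ in Definition~\ref{D:RCShadow}. The inputs are linearity of $\varphi_*$ on divisors and the identity $\varphi_*\varphi^* = [d]$ on divisors of $C'$, already used in the remark after Definition~\ref{D:RCShadow}. The identity can be checked pointwise: for a point $P \in C'$ we have $\varphi^*P = \sum_{Q \mapsto P} e_Q Q$, so $\varphi_*\varphi^*P = \left(\sum_{Q\mapsto P} e_Q\right) P = dP$.

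The three terms then give the following.
\begin{itemize}
\item The first term pushes forward to $d(2g_{C'}-2)\varphi_*(R_{\varphi})$.
\item The second term gives $-\deg(R_{\varphi})\varphi_*\varphi^*(K_{C'}) = -d\deg(R_{\varphi})K_{C'}$.
\item In the last term, $\varphi_*\varphi^*\varphi_*R_{\varphi} = d\,\varphi_*R_{\varphi}$, so $\varphi_*\bigl(2(dR_{\varphi}-\varphi^*\varphi_*R_{\varphi})\bigr) = 2(d\varphi_*R_{\varphi} - d\varphi_*R_{\varphi}) = 0$.
\end{itemize}
Summing yields $d\bigl((2g_{C'}-2)\varphi_*(R_{\varphi}) - \deg(R_{\varphi})K_{C'}\bigr)$, as claimed. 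This holds already as an equality of divisors, given the chosen $K_{C'}$; it holds a fortiori in $\Pic(C')$, and the class lies in $\Pic^0(C')$ since $\deg\varphi_*R_\varphi = \deg R_\varphi$.

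For the special case $g_{C'}=1$, the coefficient $2g_{C'}-2$ vanishes. Moreover, $K_{C'}$ is linearly equivalent to $0$ (Remark~\ref{rem:RCspecial}\eqref{rem:gE1}), so the expression is zero in $\Pic^0(C')$. If one wants equality of divisors rather than classes, one can choose $K_{C'}=0$ as the canonical divisor.

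There is no real obstacle here. The only point needing care is to state whether the identity is meant at the level of divisors or of classes. I would phrase the lemma's conclusion in $\Pic(C')$, where the choice of $K_{C'}$ is irrelevant.
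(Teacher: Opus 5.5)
Your proposal is correct and matches the paper's proof. Both push $D_{\varphi}$ forward term by term using $\varphi_*\varphi^* = [d]$, so the last term cancels, and for $g_{C'}=1$ use $2g_{C'}-2=0$ and $K_{C'}\sim 0$. One small point: over a non-algebraically closed field, your pointwise check of $\varphi_*\varphi^*P = dP$ should read $\sum_{Q\mapsto P} e_Q\,[k(Q):k(P)] = d$, which changes nothing in the argument.
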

\begin{proof} 
Since $\varphi_* \varphi^*$ is multiplication by $d$ on $\Pic^0(C')$, it follows that
\begin{align}\label{E:ShPush}
\begin{split}
\varphi_*(D_{\varphi}) &= \varphi_*(d (2g_{C'}-2)R_{\varphi}-\deg(R_{\varphi}) \varphi^{*}(K_{C'}) + 2(dR_{\varphi}-\varphi^{*}(\varphi_*)R_{\varphi})) \\
&=d (2g_{C'}-2) \varphi_*(R_{\varphi})-\deg(R_{\varphi}) \varphi_*(\varphi^{*}(K_{C'})) + 2(d \varphi_*(R_{\varphi})-\varphi_*(\varphi^{*}(\varphi_*)R_{\varphi})) \\
&= d (2g_{C'}-2) \varphi_*(R_{\varphi})-d\deg(R_{\varphi}) K_{C'} + 2d \varphi_*(R_{\varphi})-2d\varphi_*(R_{\varphi}) \\
&= d ((2g_{C'}-2) \varphi_*(R_{\varphi})-\deg(R_{\varphi}) K_{C'}).\qedhere
\end{split}
\end{align}
\end{proof}

\section{Unlikely intersections in abelian varieties}
In this section, we collect some results in unlikely intersections that we will use to prove the main theorems of this paper in the next two sections.
 
\begin{thm}\label{thm:manin-mumford}(Manin--Mumford--Raynaud, \cite[Theorem~3.1]{MasserZannier})
Let $X$ be a subvariety of an abelian variety $A$ defined over a field of characteristic zero. The torsion points in $X$ are contained and Zariski dense in a finite union of torsion translates of abelian subvarieties of $A$ contained in $X$. In particular,  if $\dim X = 1$ and $X$ contains infinitely many torsion points, then $X$ is a torsion translate of an elliptic curve.
\end{thm}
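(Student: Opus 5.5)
The first assertion is Raynaud's theorem (the Manin--Mumford conjecture), and we do not plan to reprove it. We will quote it in the form given in \cite[Theorem~3.1]{MasserZannier}. For completeness, the route we would follow to prove it is the Pila--Zannier strategy. First reduce to $X$ defined over a number field $k$ by a specialization argument. Then uniformize $A(\C)=\C^g/\Lambda$ and consider the definable set $\widetilde{X}=\exp^{-1}(X)\cap \mathcal{F}$ in a fundamental domain $\mathcal{F}$. Torsion points of order $N$ correspond to rational points of height $\le N$ in $\widetilde{X}$, viewed in real coordinates with respect to a basis of $\Lambda$. The Pila--Wilkie theorem counts these as $\ll N^{\epsilon}$ outside the algebraic part of $\widetilde{X}$. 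On the other hand, Galois orbits of torsion points of order $N$ have size $\gg N^{\delta}$ by Masser's bound, or by Serre's and Bogomolov's results. Comparing the two estimates shows that, for $N$ large, every torsion point lies on a positive-dimensional semialgebraic set in $\widetilde{X}$. Ax--Lindemann then identifies such sets with translates of abelian subvarieties contained in $X$. An induction on dimension finishes the proof, and finiteness of the number of maximal torsion cosets follows by a Noetherian argument. The main obstacle in this route is the Galois lower bound together with the Ax--Lindemann step; we would treat both as black boxes.

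The part we would actually prove here is the ``in particular'' clause, deduced from the first assertion. We interpret ``subvariety'' as integral, so $X$ is an irreducible curve. Suppose $X$ contains infinitely many torsion points. By the first assertion, these points lie in a finite union $\bigcup_{i=1}^m (t_i + B_i)$, where each $t_i\in A_{\tors}$, each $B_i\subseteq A$ is an abelian subvariety, and $t_i+B_i\subseteq X$.

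Since the union is finite and contains infinitely many points, some $t_i+B_i$ must be infinite, so $\dim B_i\ge 1$. We also have $t_i+B_i\subseteq X$, so $\dim B_i\le \dim X=1$, giving $\dim B_i=1$. Now $t_i+B_i$ is an irreducible closed subset of the irreducible curve $X$ of the same dimension, so $t_i+B_i=X$. Finally, a one-dimensional abelian variety is an elliptic curve. Hence $X$ is a torsion translate of the elliptic curve $B_i$.
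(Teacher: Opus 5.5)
Your proposal is correct and matches the paper: it also just quotes \cite[Theorem~3.1]{MasserZannier} for the main assertion and gives no proof, so your Pila--Zannier sketch is optional background rather than something the argument needs. Your deduction of the one-dimensional case is the evident argument the paper leaves implicit: some torsion coset in the finite union must be infinite, hence one-dimensional, hence equal to the irreducible curve $X$. Reading ``subvariety'' as irreducible is the right interpretation, since the clause fails for reducible $X$.
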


Recall the definitions of $\Z X$ and $\overline{\Z X}$ from the notation section. 

\begin{thm}\label{thm:RelManMum}(The Relative Manin--Mumford conjecture \cite[Theorem~1.1]{RelManMum})
Let $S$ be a nice variety over an algebraically closed field $L$ of characteristic $0$. Let $\pi \colon \mathcal{A} \rightarrow S$ be an abelian scheme of relative dimension $g \geq 1$. Let $X$ be an irreducible subvariety of $\mathcal{A}$. Assume that $\overline{\Z X} = \mathcal{A}$. If $X(L) \cap \mathcal{A}_{\tors}$ is Zariski dense in $X$, then $\dim X \geq g$.
\end{thm}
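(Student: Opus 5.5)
Since \Cref{thm:RelManMum} is quoted from Gao--Habegger, I would not reprove it from scratch. I would follow the Pila--Zannier strategy as adapted to abelian schemes, arguing by contradiction. First come some reductions. We may assume $L=\Qbar$ after a specialization argument, or treat general $L$ by spreading out. We may assume $X$ dominates $S$, replacing $S$ by the closure of $\pi(X)$ and noting that $\overline{\Z X}=\mathcal{A}$ forces this anyway. We may also pass to a finite cover of $S$ to trivialize enough level structure. Then suppose $X(L)\cap\mathcal{A}_{\tors}$ is Zariski dense in $X$ but $\dim X<g$.

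\textbf{Step 1 (Betti map and degeneracy).} Over a simply connected open $\Delta\subset S^{\mathrm{an}}$ the Betti map $b\colon \mathcal{A}_\Delta\to (\mathbb{R}/\Z)^{2g}$ is real analytic. It sends torsion points exactly to rational points, and on each fibre it is a real-analytic group isomorphism. Call $X$ \emph{nondegenerate} if $b|_{X}$ has generic real rank $2\dim X$. The first key input is Gao's Betti-rank criterion. For $X$ with $\overline{\Z X}=\mathcal{A}$ (after quotienting by any isotrivial part), degeneracy would force $X$ to be contained in a proper family of translates of abelian subschemes. I would prove this using the mixed Ax--Schanuel theorem for the universal abelian variety. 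The upshot I want is: under our hypotheses and $\dim X<g$, every sufficiently general subvariety considered below is nondegenerate. The plan is to derive a contradiction with the density of torsion from this.

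\textbf{Step 2 (height bound and Galois orbits).} For nondegenerate $X$, the height inequality of Dimitrov--Gao--Habegger gives $h(\pi(P))\ll \hat h(P)+1$ for $P$ in a dense open subset of $X$. At a torsion point $\hat h(P)=0$, so the base points $\pi(P)$ have bounded height. Combining this with Masser/David-type lower bounds for Galois orbits of torsion points on abelian varieties of bounded height, I get the following. A torsion point $P\in X$ of order $N$ has at least $c N^{\delta}$ Galois conjugates, all lying on $X$ and all of order $N$.

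\textbf{Step 3 (counting and functional transcendence).} Take a fundamental domain $\mathcal{F}$ that is definable in $\mathbb{R}_{\mathrm{an},\exp}$, and consider the definable set $b(X\cap\mathcal{F})\subset \mathbb{R}^{2g}\times\Delta$. The conjugates from Step 2 give $\gg N^{\delta}$ rational points of denominator $N$ on it. By Pila--Wilkie, for large $N$ these must lie on connected semialgebraic curves contained in the transcendental part. Such a curve yields a positive-dimensional subvariety of $X$ on which $b$ has rank less than twice its dimension. By the Ax--Schanuel argument of Step 1, this subvariety lies in a degenerate proper subvariety. Collecting these (finitely many families, by definability), we see that a Zariski dense set of torsion points would lie in the degeneracy locus. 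That locus is a proper Zariski closed subset when $X$ is nondegenerate, which contradicts density.

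The main obstacle is Step 2. Uniform Galois lower bounds are only available once the base points have bounded height, and the height inequality itself requires nondegeneracy. So Steps 1 and 2 must be proved in tandem. I would first establish the Betti-rank criterion of Step 1 and only then feed nondegeneracy into the height machine.
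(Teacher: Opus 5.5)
The paper gives no proof of this statement. It is quoted from Gao--Habegger \cite[Theorem~1.1]{RelManMum} and used as a black box. Your outline is the Pila--Zannier framework their proof is built on, but as written it has a genuine gap in Step~1, and Steps~2 and~3 both depend on it.

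\textbf{The upshot of Step~1 is false.} The hypotheses $\overline{\Z X}=\mathcal{A}$ and $\dim X<g$ do not force $X$ to be nondegenerate, even when $\mathcal{A}$ has no isotrivial part.
\begin{itemize}
\item Isotrivial case: let $C$ have genus $3$, $A=\Jac(C)$, $S$ a curve, $\mathcal{A}=A\times S$ and $X=C\times S$. Then $\overline{\Z X}=\mathcal{A}$ and $\dim X=2<3$, yet $b|_X$ has rank $2<4$.
\item No isotrivial part: let $X'$ be the image of a section of $\mathcal{E}^3\to S'$ with independent coordinates over a curve $S'$ (a Masser--Zannier type example), and let $T$ be a curve. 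Then $X=X'\times T\subset\mathcal{A}=\mathcal{E}^3\times T\to S'\times T$ satisfies $\overline{\Z X}=\mathcal{A}$ and $\dim X=2<3$. There is no isotrivial factor to quotient by, but the Betti map of $X$ factors through that of $X'$, so $X$ is degenerate.
\end{itemize}

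For such $X$ both later steps break.
\begin{itemize}
\item Step~2: the Dimitrov--Gao--Habegger inequality fails on every dense open subset. In the first example, fix a nontorsion $c\in C$ and vary $s$; then $\hat h$ stays constant while $h(s)$ is unbounded. So you get neither the base-height bound nor uniform Galois lower bounds.
\item Step~3: the degeneracy locus is all of $X$, so pushing the torsion points into it gives no contradiction.
\end{itemize}
You correctly sense the tension when you say Steps~1 and~2 must be proved in tandem. But your resolution (first prove nondegeneracy, then run the height machine) rests on the false claim.

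What is missing is a way to treat degenerate $X$. One option is a height bound for torsion points that does not presuppose nondegeneracy. Another is a structural reduction: descend along base directions in which the family is constant, quotient by abelian subschemes and induct on $g$, and use Manin--Mumford on constant pieces. This is where the substance of the Gao--Habegger proof lies, and nothing in your outline supplies it.

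Two smaller points.
\begin{itemize}
\item Spreading out from $L$ to $\Qbar$ is not a reduction as stated. Absorbing the transcendental parameters into the base raises $\dim X$ by $\dim T$ but leaves $g$ unchanged, so the $\Qbar$-case only gives $\dim X+\dim T\ge g$. Specializing instead requires showing that both hypotheses survive at some $\Qbar$-point.
\item Pila--Wilkie places the many rational points in the \emph{algebraic} part of the definable set, not the transcendental part.
\end{itemize}
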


\begin{rem}\label{rem:MasserZannier} When $S$ is a point, the relative Manin--Mumford conjecture reduces to the Manin--Mumford--Raynaud theorem above. When $\mathcal{A} = \mathcal{E}^2 \rightarrow S$  for an elliptic $S$-scheme $\mathcal{E}$ with $\dim S=1$, the relative Manin--Mumford conjecture reduces to the Masser--Zannier unlikely intersections theorem \cite{MasserZanniertorsion},
by taking $X$ to be the image of a section $\tau\times\sigma \colon S\rightarrow \mathcal{E}^2$ coming from a pair of linearly independent sections of $\mathcal{E}$ (note that $\overline{\Z X} = \mathcal{E}^2$ exactly when the sections $\tau$ and $\sigma$ are linearly independent).
In this case, Theorem \ref{thm:RelManMum} implies that there are at most finitely many points $t\in S$, such that $\tau(t)$ and $\sigma(t)$ are simultaneously torsion. In \Cref{sec:one-parameter}, we apply Theorem \ref{thm:RelManMum} with $\dim S = 1$ and $\dim S = 2$ to abelian $S$-schemes that are products of elliptic $S$-schemes to prove Theorems \ref{T:CerMM2} and  \ref{T:CerMM1}. 
\end{rem}

\begin{defn} Let $Z$ be a subvariety of an abelian variety $A$. We say that $Z$  generates $A$ if the smallest abelian subvariety of $A$ containing $Z$ is $A$.  
\end{defn}

Let $A$ be an abelian variety defined over $\Qbar$. Fix a N\'{e}ron--Tate height on $A(\Qbar)$ associated to some symmetric and ample line bundle. Let  $X \subset A$ be a subvariety also defined over $\overline{\Q}$. We now state a result of Habegger about points in $X(\Qbar)$ lying outside all positive codimension abelian subvarieties in the special case when $\dim X = 1$. By the discussion at the bottom of \cite[Page~406]{HabeggerCompDim}, when $\dim X = 1$, the subset $X^{oa,[1]} \subset X$ in \cite[Page~407, Theorem]{HabeggerCompDim} is equal to $X$ when $X$ is not contained in any translate of an abelian subvariety of $A$ of codimension $\geq 1$.
\begin{thm}\label{thm:Habeggerboundedheight}(\cite[Theorem]{HabeggerCompDim}) Let $X \subset A$ be a subvariety of an abelian variety, both defined over $\Qbar$. Let $T$ be the set of all abelian subvarieties of $A$ of codimension $\geq 1$. Assume $\dim X = 1$ and that $X$ is not contained in a translate of any element of $T$. Then the set of points in $X(\Qbar) \cap \left(\bigcup_{B \in T} B \right)$ has bounded height. In particular, there are infinitely many points in $X(\Qbar)$ that generate $A$.
\end{thm}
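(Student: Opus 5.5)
The main assertion is exactly Habegger's theorem \cite[Theorem]{HabeggerCompDim}, applied with $X^{oa,[1]} = X$ as explained just before the statement. So the plan is to take the bounded height statement as given and deduce the final ``in particular'' clause. The idea is to show that $X(\Qbar)$ contains points of arbitrarily large N\'eron--Tate height. Any such point of large height then lies outside every $B \in T$, and so generates $A$.

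\textbf{Step 1 (unbounded heights on $X$).} Let $\hat h$ be the fixed N\'eron--Tate height attached to a symmetric ample line bundle $L$ on $A$. By the height machine, $\hat h = h_L + O(1)$ on $A(\Qbar)$, where $h_L$ is a Weil height for $L$. Restricting to the curve $X$ (or to its normalization $\widetilde X$), the pullback of $L$ has positive degree, since $L$ is ample and $X$ is not a point. Now choose a nonconstant morphism $f \colon \widetilde X \to \mathbb{P}^1$ over $\Qbar$. By functoriality and comparison of heights for ample divisors on a curve, there are constants $c_1 > 0$ and $c_2$ with
\[ h(f(P)) \le c_1 \hat h(P) + c_2 \quad \text{for all } P \in \widetilde X(\Qbar). \]
Since $f$ is surjective on $\Qbar$-points, for each integer $n$ we may choose $P_n \in \widetilde X(\Qbar)$ with $f(P_n) = [n:1]$. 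Because $h([n:1]) = \log|n| \to \infty$, the heights $\hat h(P_n)$ are unbounded. There are infinitely many such $P_n$, and we may discard the finitely many lying over the singular points of $X$, so their images in $X(\Qbar)$ are infinitely many points of unbounded height.

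\textbf{Step 2 (conclusion).} Let $M$ be the height bound supplied by the main assertion for $X(\Qbar) \cap \bigcup_{B \in T} B$. By Step 1, infinitely many $P \in X(\Qbar)$ satisfy $\hat h(P) > M$, and each such $P$ lies in no $B \in T$. The smallest abelian subvariety of $A$ containing such a $P$ therefore cannot have codimension $\ge 1$, so it equals $A$; that is, $P$ generates $A$.

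The only step needing care is Step 1, namely the comparison between the N\'eron--Tate height on $X$ and the Weil height pulled back from $\mathbb{P}^1$. This is standard (Weil's height machine on the curve $\widetilde X$), and the rest is formal.
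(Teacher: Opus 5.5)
Your proposal is correct and matches the paper's approach. The paper gives no separate proof: it cites Habegger's theorem, notes that $X^{oa,[1]}=X$ for a curve not contained in a translate of a proper abelian subvariety, and leaves the ``in particular'' clause implicit. Your Step 1 fills in that implicit step correctly, by producing infinitely many points of $X(\Qbar)$ of unbounded N\'eron--Tate height via a nonconstant map $\widetilde X \to \mathbb{P}^1$ and height comparison on the curve.
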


\subsection{Generating subvarieties of abelian varieties}

In this section, we produce examples of subvarieties $X$ of an abelian scheme $\mathcal{A} \rightarrow S$ that satisfy the hypothesis $\overline{\Z X} = \mathcal{A}$ of Theorem~\ref{thm:RelManMum}.

We begin with a lemma that will be used in the proof of Theorem~\ref{thm:Galcoverofggeq2}.

Let $C$ be a nice curve over a field $K$, let $A$ be an abelian variety and let $i \colon C \rightarrow A$ be a $K$-morphism. Let $r \geq 1$ be an integer, and let $\vec{a} \colonequals (a_1,a_2,\ldots,a_r)$ be an $r$-tuple of positive integers.  Let $i_r \colonequals i_r^{\vec{a}} \colon C^r \rightarrow A$ be the map defined by $i_r(x_1,\ldots,x_r) \colonequals \sum_{i=1}^r a_i i(x_i)$.

\begin{lem}\label{lem:curvesinJacobians} Let $C,A,i,i_r$ be as above. Assume that $i(C)$ generates $A$. Let $X \colonequals i_r(C^r)$. Then either $X = A$, or $\dim X < \dim A$ and any torsion translate of an abelian subvariety contained in $X$ has positive codimension in $X$.
\end{lem}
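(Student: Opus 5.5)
The plan is to reduce the statement to one claim: if $X$ is itself a torsion translate of an abelian subvariety, then $X=A$. First, $X=i_r(C^r)$ is the image of the irreducible projective variety $C^r$, so $X$ is an irreducible closed subvariety of $A$. Assume $X\neq A$. Then $\dim X<\dim A$, since an irreducible closed subvariety of full dimension is all of $A$. Suppose $t+B\subseteq X$ with $t$ torsion, $B$ an abelian subvariety, and $\dim B=\dim X$. By irreducibility $t+B=X$. So it suffices to show that $X=t+B$ forces $B=A$, contradicting $X\ne A$.

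So assume $X=t+B$. Fix a point $p\in C$ and freeze the coordinates $x_2=\dots=x_r=p$. Then $X$ contains $a_1 i(C)+c$, where $c=\sum_{j\ge2}a_j i(p)$. Since $a_1i(p)+c\in X=t+B$ as well, subtracting gives $a_1\bigl(i(C)-i(p)\bigr)\subseteq B$. Let $B''$ be the identity component of $[a_1]^{-1}(B)$; it is an abelian subvariety with $[a_1](B'')\subseteq B$, and $\dim B''=\dim B<\dim A$ because $[a_1]$ is an isogeny. The set $i(C)-i(p)$ is connected, contains $0$, and lies in $[a_1]^{-1}(B)$, so it lies in $B''$. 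Hence $i(C)\subseteq i(p)+B''$, with $B''$ a proper abelian subvariety.

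The remaining step is to contradict the hypothesis that $i(C)$ generates $A$, and I expect this to be the main obstacle, because it depends on how ``generates'' treats translates. If we read it as ``$i(C)-i(C)$ (equivalently $i(C)-i(p)$) generates $A$'', the contradiction is immediate: $i(C)-i(p)\subseteq B''\subsetneq A$. This reading is what holds in the intended application, where $i$ is an Abel--Jacobi embedding and differences of points of $C$ generate $J$. Under the literal reading of the Definition (smallest abelian subvariety containing the set $i(C)$), I would try to exploit that the translating element is torsion, i.e.\ that $t+B$ is a torsion translate. The point $(\sum_j a_j)\,i(p)$ lies in $X=t+B$ and $t$ is torsion, so the image of $i(p)$ in $A/B''$ should be torsion, at least up to the finite group $[a_1]^{-1}(B)/B''$. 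Thus $i(C)$ lies in a torsion translate $q+B''$. However, a torsion translate of a proper abelian subvariety can still fail to lie in any proper abelian subvariety; think of $(q_1,0)+(0\times E)$ inside $E\times E$ with $q_1\neq0$ torsion. So under the literal reading the argument does not close, and I expect the lemma needs the difference-generation interpretation. I would state that interpretation explicitly, noting that it is satisfied in the applications. With it, $B''=A$, hence $B=A$ and $X=A$, as required.
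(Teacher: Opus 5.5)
Your argument is correct under the hypothesis you make explicit, and your worry about the literal Definition is justified. In fact your $E\times E$ example is a counterexample to the lemma as stated. Take $C=E$, $A=E\times E$, $i(x)=(q,x)$ with $q\neq0$ torsion, $r=1$ and $a_1=1$. Then $i(C)$ lies in no proper abelian subvariety, yet $X=\{q\}\times E\neq A$ is itself a torsion translate of $0\times E$.

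The paper's proof fails at the corresponding step. It multiplies by the order $n$ of the translating point, so that $Y=[n](X)$ equals the abelian subvariety $B=[n](\tilde B)$. Via the diagonal it notes $Y\supseteq[d](i(C))$ with $d=n\sum a_i$. It then claims $[d](i(C))$ generates $A$ because $i(C)$ does; in the example, $[d](i(C))=0\times E$.

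That claim holds if ``generates'' means the smallest \emph{subgroup variety} containing $i(C)$ is $A$. This is the sense needed later, where generating points are used to get $\overline{\Z P}=J$. Under that reading, $[d]$ maps the closure of the group generated by $i(C)$ onto the closure of the group generated by $[d](i(C))$. The claim also holds under your difference reading. That is the reading actually satisfied in the proof of Theorem~\ref{thm:Galcoverofggeq2}, where $i(x)-i(y)=[d(2g_C-2)](x-y)$, so the paper's results are unaffected.

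Compared with the paper, you freeze $x_2,\dots,x_r$ and subtract, rather than killing the translate with $[n]$ and using the diagonal. Your route never uses that $t$ is torsion. So under your hypothesis it gives the stronger conclusion that $X\neq A$ is not a translate of any proper abelian subvariety at all. Your $B''$ is just $B$, since $B\subseteq[a_1]^{-1}(B)$ is connected, contains $0$, and has the same dimension.

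Under the intermediate subgroup-variety reading, your argument closes with the observation you already made. Let $n$ be the order of $t$ and $N=\sum_j a_j$. Then $N\,i(p)\in t+B$ forces $nN\,i(p)\in B$. Hence $i(C)\subseteq i(p)+B$ lies in the proper subgroup variety $\bigcup_{k}\bigl(k\,i(p)+B\bigr)$, a contradiction. This is exactly where the torsion hypothesis on the translate is needed.
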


\begin{proof}
If $\dim X \geq \dim A$, then $X=A$ since $A$ is irreducible and $X$ is a closed subvariety of $A$. So now assume that $\dim X < \dim A$. 

Let $a \in A$ be a torsion point of order $n$, and let $\tilde{B}$ be an abelian subvariety of $A$ such that the torsion translate $T \colonequals a+\tilde{B}$ is contained in $X$. Let $j_r \colonequals [n] \circ i_r$, let $Y \colonequals [n](X) = j_r(C^r)$ and let $B \colonequals [n](T)$, so $B$ is a closed subvariety of $Y$. Since $[n]$ is an isogeny and $\ord(a) = n$, it follows that $B = [n](a+\tilde{B}) = [n](\tilde{B})$ is an abelian subvariety of $A$. As $[n]$ is a finite map, we have $\dim B = \dim T$ and $\dim Y = \dim X$, so it suffices to show $\dim B < \dim Y$.

Suppose that $\dim B \geq \dim Y$. Then $B=Y$ since $B$ is a closed subvariety of $Y$ and $Y = j_r(C^r)$ is irreducible. We now argue that if $Y$ is an abelian variety then $Y=A$, which contradicts our assumption that $\dim Y = \dim X < \dim A$. Let $d \colonequals \sum na_i$. Pre-composing $j_r$ with the diagonal embedding $ C \rightarrow C^r$ shows that $Y$ contains $[d]_*(i(C))$. Since $i(C)$ generates $A$ by assumption, it follows that $[d]_*(i(C))$ generates $[d]_*(A) = A$.  Therefore $Y=A$.
\end{proof}

For the rest of this section, let $C$ be a nice curve with Jacobian $J$. For a degree $1$ divisor $D$ on $C$, let $i_D \colon C \rightarrow J$ denote the corresponding Abel--Jacobi map defined by $i_D(P) \colonequals [P-D]$. 
\begin{lem}\label{lem:abeliantranslate}
Let $C$ be a nice curve. Assume $g(C)\geq 2$. Fix a degree $1$ divisor $P$ and a point $a  \in J$. Fix a nonzero $n\in \Z$. Then $a+[n](i_{P}(C))$ is not contained in a translate of an abelian subvariety of $J$ of positive codimension.
\end{lem}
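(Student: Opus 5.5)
Suppose for contradiction that $a+[n](i_P(C)) \subseteq b + B$ for some point $b \in J$ and some abelian subvariety $B \subsetneq J$. The plan is to reduce to the standard fact that the Abel--Jacobi image of a curve generates its Jacobian, and then use that $[n]$ is an isogeny.

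\textbf{Step 1: remove the translation.} Translating by $-a$, we get $[n](i_P(C)) \subseteq b' + B$ with $b' = b-a$. The differences of points of $[n](i_P(C))$ all lie in $B$, so we may work with differences throughout.

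\textbf{Step 2: push everything into $B$.} Fix a point $Q_0 \in C(\overline{K})$. For every $Q \in C(\overline{K})$ we have
\[ [n](i_P(Q)) - [n](i_P(Q_0)) = [n]([Q-Q_0]) \in B. \]
So $[n]$ maps the curve $i_{Q_0}(C)$ into $B$. Since $B$ is a group, the same holds for the image of every $C^m \to J$, $(x_1,\dots,x_m)\mapsto \sum_j \pm [x_j - Q_0]$, so $[n]$ maps the subgroup of $J(\overline{K})$ generated by $i_{Q_0}(C)$ into $B$.

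\textbf{Step 3: that subgroup is all of $J$.} The map $C^{g} \to J$, $(x_1,\dots,x_g) \mapsto [\sum_j x_j - gQ_0]$, is surjective by Jacobi inversion (Riemann--Roch). Hence $i_{Q_0}(C)$ generates $J$ as a group, and Step 2 gives $[n](J) \subseteq B$.

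\textbf{Step 4: conclude.} Since $n \neq 0$, the map $[n]$ is an isogeny, so $[n](J) = J$. Thus $J \subseteq B$, contradicting that $B$ has positive codimension.

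Equivalently, one can phrase Step 3 as "$i_{Q_0}(C)$ generates $J$" in the paper's sense and apply the last paragraph of the proof of Lemma~\ref{lem:curvesinJacobians}. The only nonformal input is Jacobi inversion, which is standard, so I expect no real obstacle.

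One caveat: the argument as written does not use $g(C) \geq 2$. That hypothesis is presumably included so that $B$ cannot be the trivial group on a genus-1 curve in a degenerate sense; in the $g=1$ case $J$ has no abelian subvarieties of positive dimension and positive codimension, but the statement still holds via $B = 0$. We are fine either way.
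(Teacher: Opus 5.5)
Your proof is correct, but it reaches the contradiction by a different mechanism than the paper's proof.

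The paper works by pulling back along $[n]$:
\begin{enumerate}
\item It first translates so that $a+[n](i_P(C))\subset B$.
\item It uses divisibility of $J$ to choose a degree $1$ divisor $Q$ with $[n](i_Q(C))\subset B$.
\item It pulls back to get $i_Q(C)\subset [n]^{-1}(B)$. Since $[n]^{-1}(B)$ is a finite union of translates of positive-codimension abelian subvarieties and $C$ is irreducible, $i_Q(C)$ lies in a single translate.
\item It re-adjusts $Q$ once more and contradicts the fact that an Abel--Jacobi image generates $J$.
\end{enumerate}

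You instead take differences and push forward:
\begin{enumerate}
\item All of $[n](i_P(C))$ lies in one coset of $B$, so you get $[n](i_{Q_0}(C))\subseteq B$ for a point $Q_0$ directly, without solving $a=[n(Q-P)]$.
\item By Jacobi inversion, $i_{Q_0}(C)(\overline{K})$ generates $J(\overline{K})$ as an abstract group.
\item Since $[n]$ is surjective, $J=[n](J)\subseteq B$.
\end{enumerate}

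What each route buys:
\begin{itemize}
\item Your route avoids the structure of $[n]^{-1}(B)$ and the irreducibility and re-translation bookkeeping. It uses only that $B(\overline{K})$ is a subgroup, so it even rules out containment in a coset of any proper closed subgroup. The cost is invoking the slightly stronger, but standard, group-theoretic form of generation.
\item The paper's route stays within the notion of ``generating'' abelian subvarieties that it uses throughout that section.
\end{itemize}

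You are also right that $g(C)\geq 2$ is not needed. The paper uses it only for the generation statement, which holds for every $g\geq 1$: for $g=1$ the image $i_Q(C)$ is all of $J$. So your genus-$1$ caveat is harmless, though your explanation of why the hypothesis appears is a bit garbled.
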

\begin{proof} 
Assume $a+[n](i_{P}(C)) \subset a'+B$ for some $a' \in J$ and abelian subvariety $B$ of $J$ of positive codimension. Then $(a-a')+[n](i_{P}(C)) \subset B$. Replacing $a$ by $a-a'$, we may assume that $a+[n](i_{P}(C)) \subset B$ without loss of generality. Let $Q$ be a degree $1$ divisor such that $a+[nP-nQ] = 0 \in J$. Then
\[ [n](i_{Q}(C)) = [n]((Q-P)+i_{P}(C)) = [n(Q-P)]+[n](i_{P}(C)) = a+[n](i_{P}(C)) \subset B.\]
If $[n](i_{Q}(C)) \subset B$, then $i_{Q}(C) \subset [n]^{-1}(B)$. Since $n \neq 0$, it follows that $[n] $ is an isogeny and hence $[n]^{-1}(B)$ is a finite union of translates of abelian subvarieties of $J$ of positive codimension. Since $i_{Q}(C)$ is irreducible, it has to be contained in one of them, and once again by adding an appropriate degree $0$ divisor to the the degree $1$ divisor $Q$ we may assume that $i_{Q}(C)$ is contained in an abelian variety of positive codimension in $J$ without any loss of generality. This is a contradiction since $i_Q(C)$ generates $J$ by our assumption that $g \geq 2$.
\end{proof}

\begin{lem}\label{lem:infintepoints}Fix an integer $r \geq 1$, an $r$-tuple of positive integers $(a_1,\ldots,a_r)$, and let $C,i,i_r$ be as in \Cref{lem:curvesinJacobians}. Assume $C$ is defined over $\Qbar$ and $g(C) \geq 2$. Fix pairwise distinct points $P_2,\dots P_r$ of $C(\Qbar)$. There exist infinitely many $P \in C(\Qbar)$ such that $\overline{\Z i_r(P,P_2,\ldots,P_r)} = J$.  
\end{lem}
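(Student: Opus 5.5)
The plan is to reduce to Habegger's theorem (\Cref{thm:Habeggerboundedheight}) applied to a suitable translate of a multiple of the Abel--Jacobi curve. Here $i$ is an Abel--Jacobi embedding $i_D$ and $A = J$. Set $a \colonequals \sum_{k=2}^r a_k\, i(P_k) \in J(\Qbar)$. Then for every $P \in C(\Qbar)$ we have
\[ i_r(P,P_2,\ldots,P_r) = a + [a_1](i(P)), \]
so the points in question are exactly the $\Qbar$-points of the irreducible curve $X' \colonequals a + [a_1](i(C)) \subset J$, defined over $\Qbar$. Since $[a_1]$ is finite, $P \mapsto a+[a_1]i(P)$ is finite-to-one. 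Hence infinitely many good points of $X'$ give infinitely many good $P$.

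First, by \Cref{lem:abeliantranslate} (with $n = a_1 \neq 0$ and $g(C)\geq 2$), $X'$ is not contained in a translate of any abelian subvariety of $J$ of positive codimension. This is exactly the hypothesis of \Cref{thm:Habeggerboundedheight}, with $\dim X' = 1$. Consequently the points of $X'(\Qbar)$ lying in some positive-codimension abelian subvariety have Néron--Tate height bounded by some constant $c$. Since $X'$ is not a torsion translate of an elliptic curve (that would contradict the previous sentence), \Cref{thm:manin-mumford} says $X'$ has only finitely many torsion points. Moreover $X'(\Qbar)$ contains points of arbitrarily large height, for instance because $\Qbar$-points of $C$ over number fields of bounded degree are infinite in number while Northcott's property holds.

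Second, I translate ``$\overline{\Z x} = J$'' into a condition Habegger's theorem controls. For $x \in J(\Qbar)$, the closure $\overline{\Z x}$ is an algebraic subgroup of $J$. Its identity component $B$ is an abelian subvariety, and $mx \in B$ for some $m \geq 1$. So $\overline{\Z x} \neq J$ exactly when $x \in [m]^{-1}(B)$ for some proper abelian subvariety $B$ and some $m\geq 1$. In other words, $x$ lies in a torsion translate $t + B'$ of a proper abelian subvariety, with $t$ torsion. To finish, I must show that points of $X'$ lying in positive-codimension algebraic subgroups, not just abelian subvarieties, also have bounded height. Then any $x\in X'(\Qbar)$ with $\hat h(x) > c$ satisfies $\overline{\Z x} = J$, and there are infinitely many such $x$.

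The main obstacle is this passage from abelian subvarieties to algebraic subgroups. As stated in the paper, \Cref{thm:Habeggerboundedheight} is phrased for $\bigcup_{B\in T} B$. My first attempt will be to invoke Habegger's bounded height theorem in its original form, which is stated for the union of all algebraic subgroups of codimension $\geq \dim X' = 1$, i.e. including torsion translates. Under the same hypothesis on $X'$, verified above via \Cref{lem:abeliantranslate}, it gives the height bound $c$ directly. If one insists on the stated version, I would try a fallback: note that $\hat h$ is a quadratic form that is insensitive to torsion translation, $\hat h(y+t) = \hat h(y)$. Then compare $X' \cap (t+B')$ with $(X'-t) \cap B'$ and apply the stated theorem to the curves $X'-t$, which also satisfy \Cref{lem:abeliantranslate}. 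The issue in that route is uniformity of the bound in $t$, which is precisely what Habegger's original formulation provides.
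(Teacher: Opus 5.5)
Your proposal is correct and follows the paper's proof: reduce to the curve $a+[a_1](i(C))$, verify Habegger's hypothesis with \Cref{lem:abeliantranslate}, and apply \Cref{thm:Habeggerboundedheight}. Your extra care fills two steps the paper leaves implicit. First, the paper's restated version (with $T$ consisting only of abelian subvarieties) does not by itself give $\overline{\Z x}=J$, and invoking Habegger's original formulation for all algebraic subgroups of positive codimension fixes this. Second, your Northcott argument supplies the unbounded height on $X'(\Qbar)$ that the "in particular" conclusion needs.
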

\begin{proof}
Let $p \colon C\rightarrow C^r$ be defined by $p(P) \colonequals (P,P_2,\dots, P_r)$ and let $j_r \colonequals i_r \circ p$. Let $a \colonequals \sum_{i=2}^r a_i i(P_i) \in J$. Then $j_r(C) = a + [a_1](i(C))$, and since $a_1 \neq 0$, it follows that $j_r(C) \subset J$ is a curve defined over $\Qbar$. \Cref{lem:abeliantranslate} implies that $j_r(C)$ is not contained in a translate of an abelian subvariety of $J$ of positive codimension. Applying \Cref{thm:Habeggerboundedheight} to $j_r(C) \subset J$, we conclude that there exist infinitely many $P \in C(\Qbar)$ such that $J = \overline{\Z j_r(P)} = \overline{\Z i_r(P,P_2,\ldots,P_r)}.$\end{proof}

\section{A very general cover of a  hyperbolic curve has nontorsion Ceresa cycle}\label{sec:proofmainthm}

\begin{proof}[Proof of Theorem~\ref{thm:Galcoverofggeq2}]
Let $[\varphi \colon \widetilde{C} \rightarrow C] \in \mathcal{H}_{C,r}(\rho)$. Let $d, R_{\varphi},D_{\varphi}$ be as in Lemma~\ref{lem:relcanpush}.  %Let $\deg(\varphi) = d$. Let $R_{\varphi}$ be the ramification divisor of $\varphi$ and let $D_{\varphi}$ be the relative canonical shadow of $\varphi$. 
Since the cover $\varphi$ is branched over at least one point, we may write $\varphi_*R_{\varphi} = \sum_{i=1}^r a_i B_i$ for some integers $r,a_1,\ldots,a_r \geq 1$. By Proposition~\ref{prop:rcsceresa} and Lemma~\ref{lem:relcanpush}, to prove the theorem, it suffices to show that for a fixed $C$, and very general choice of branch points $(B_1,\ldots,B_r) \in C^r(\C)$, the point 
\[ \varphi_*(D_{\varphi}) = d (2g_{C}-2) \left( \sum_{i=1}^r a_i B_i \right) - \left(\sum_{i=1}^r a_i \right)dK_C \] 
in $J \colonequals \Jac(C)$ is nontorsion. We now produce a morphism $i_r \colon C^r \rightarrow J$ such that $\varphi_*(D_{\varphi}) = i_r(B_1,\dots,B_r)$; then if $X \colonequals i_r(C^r)$, proving that $\varphi_*(D_{\varphi})$ is nontorsion for a very general choice of $(B_1,\dots,B_r) \in C^r(\C)$ reduces to showing that a very general point of $X(\C)$ is nontorsion. 
To that end, let $i \colon C \rightarrow J$ be the Abel-Jacobi map defined by $x \mapsto d(2g_{C}-2)x-dK_{C}$, and let  $i_r \colon C^r \rightarrow J$ be  defined by $i_r(x_1,\ldots,x_r) \colonequals \sum_{i=1}^r a_i i(x_i)$.  

We now show that a very general point of $X(\C)$ is nontorsion. This is true if $X=J$, since $J_{\tors}$ is a countable union of closed subvarieties of $J$ of positive codimension. So now assume $X \neq J$. Since $\deg(dK_C) = d(2g_C-2) > 0$ when $g_C \geq 2$, a Riemann--Roch computation shows that the smallest abelian subvariety of $J$ containing $i(C)$ is $J$. Hence by Lemma~\ref{lem:curvesinJacobians}, every torsion translate of an abelian subvariety contained in $X$ has positive codimension in $X$. Furthermore, since Theorem~\ref{thm:manin-mumford} implies that the torsion locus in $X$ is contained in a finite union of torsion translates of abelian subvarieties of $J$ which are contained in $X$, it follows that a general point of $X(\C)$ is nontorsion when $X \neq J$.
\end{proof}

\begin{rem}
Note that the proof actually gives us the stronger conclusion that $\Cer(\widetilde{C})$ is nontorsion in the Chow group for a {\textit{general}} point $[\widetilde{C} \rightarrow C]$ in $ \mathcal{H}_{d,r}(C,\rho)(\C)$ when $\dim i_r(C^r) < \dim J$. This holds precisely when $r < \dim J$ since $a_i$ are positive (arguing for example, using the induced map on tangent spaces at a point as in the case when all $a_i=1$).
\end{rem}

\section{Infinitely many families with small Ceresa-torsion locus} 
In this section we prove \Cref{T:CerMMgen}, using \Cref{thm:RelManMum} applied to a pair $(\mathcal{A},X)$, where $\mathcal{A}$ is an abelian scheme $\mathcal{A} \rightarrow Z_r$ obtained by pulling back the universal family of relative Jacobians to a subvariety $Z_r \subset \mathcal{M}_{g,r}$, and where $X \subset \mathcal{A}$ is a subvariety obtained as the image of a certain weighted Abel--Jacobi map $i_r$ that we now define. To that end, recall the setup of \Cref{T:CerMMgen}.

Fix an integer $r \geq 1$.  For a locally closed subvariety $Z$ of $\mathcal{M}_g$, let $\mathcal{J}_g \rightarrow Z$ be the universal family of Jacobians of the curves parameterized by $Z$,  let $Z_r \colonequals \mathcal{M}_{g,r} \times_{\mathcal{M}_g} Z$, we let $\mathcal{A}_Z \colonequals Z_r {\times_Z} \mathcal{J}_g$.  Let $\pi_{Z} \colon \mathcal{A}_Z \rightarrow Z_r$ and $f_Z \colon Z_r \rightarrow Z$ denote the natural projection maps, and when $Z$ is fixed we drop the subscripts on $\mathcal{A}_Z, \pi_Z$ and $f_Z$ to ease notation. For an $r$-tuple of positive integers $\vec{a} \colonequals (a_1,a_2,\ldots,a_r)$, define the section $i_r^{\vec{a}}$ of  $\pi \colon \mathcal{A} \rightarrow Z_r$ by
\begin{align*}
    i_r^{\vec{a}} \colon Z_r  &\rightarrow \mathcal{A} \\
    (C,P_1,P_2,\ldots,P_r) &\mapsto \sum_{i=1}^r \left[ (2g-2) a_i P_i - a_i K_C \right].
\end{align*}

For an abelian scheme $\mathcal{A} \rightarrow U$, and a point $z \in U$, we let $\mathcal{A}_z$ denote the fiber of $\mathcal{A}$ over $z$. 
\begin{lem}\label{lem:secthroughpt}
Let $\tilde{z} \in Z_r(\Qbar)$ and let $z \colonequals f(\tilde{z}) \in Z(\Qbar)$. Let $\vec{a}$ be an $r$-tuple of positive integers, and let $i_r \colonequals i_r^{\vec{a}}$ as above. Then there exists an \'{e}tale open neighbourhood $U$ of $z$ in $Z$, and a section $s \colon U \rightarrow ({Z_r})_U$ of $f_U$ such that $s(z) = \tilde{z}$. Furthermore, if $\overline{\Z i_r(\tilde{z})} = \mathcal{A}_z$, then $\overline{\Z i_r(s(U))} = \mathcal{A}_U$.
\end{lem}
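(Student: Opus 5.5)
Two things need proving: the existence of the section $s$, and the propagation of the condition $\overline{\Z i_r(s(U))}=\mathcal{A}_U$ from the single fiber over $z$ to all of $U$.

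\emph{Existence of $s$.} Since $\mathcal{M}_{g,r}\to\mathcal{M}_g$ is smooth (for $g\ge 2$ with level-$5$ structure it is an open subscheme of the $r$-fold fiber power of the universal curve, minus the diagonals), its base change $f\colon Z_r\to Z$ is smooth. A smooth morphism has sections étale locally through any point of a fiber. This is the local structure theorem: near $\tilde z$, $f$ factors as an étale map to $\mathbb{A}^n_Z$. Composing the zero-type section of $\mathbb{A}^n_Z$ through the image of $\tilde z$ with the étale map gives, after passing to an étale neighbourhood $U\to Z$ of $z$, a section $s\colon U\to (Z_r)_U$ with $s(z)=\tilde z$ (after choosing a lift of $z$ to $U$). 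I expect this part to be routine.

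\emph{Propagation of the generating condition.} Let $Y:=\overline{\Z i_r(s(U))}\subset \mathcal{A}_U$. Since $\Z\, i_r(s(U))$ is a union of images of sections, $Y$ is a closed subgroup scheme of $\mathcal{A}_U$ (up to replacing it with the closure of the subgroup generated by the $[N]\circ\sigma$, where $\sigma:=i_r\circ s$). Its fiber $Y_z$ contains $[N]\sigma(z)$ for all $N$, hence contains $\overline{\Z i_r(\tilde z)}=\mathcal{A}_z$. So $Y_z=\mathcal{A}_z$, and $\dim Y_z=g$. Since $U$ is irreducible (shrink it to a connected component containing the lift of $z$), it suffices to show that some irreducible component of $Y$ dominating $U$ has relative dimension $g$, i.e.\ that $\dim Y\ge \dim U+g=\dim\mathcal{A}_U$.

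This is the main obstacle: a fiber being large does not formally force the generic fiber to be large, since upper semicontinuity goes the wrong way. I would handle it as follows. The components of $Y$ are the closures of the $[N]\sigma(U)$, and $Y$ is a flat-over-the-generic-point group scheme. Let $\eta$ be the generic point and $G:=Y_\eta$, the Zariski closure of $\Z\sigma(\eta)$ in $\mathcal{A}_\eta$. By countability and Noetherianity, $G$ is a finite union of translates of an abelian subvariety $B_\eta$ by multiples of $\sigma(\eta)$. Spreading out, after shrinking $U$ (keeping $z$, which is possible if we only discard loci not containing $z$ — or better, by taking the closure), $\overline{G}$ is a closed subgroup whose components are translates of an abelian subscheme $\mathcal{B}$, and $[m]\sigma$ lands in $\mathcal{B}$ for some $m\ge1$.

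Abelian subschemes of an abelian scheme over a normal base extend uniquely, so $\mathcal{B}$ is an abelian subscheme over all of $U$ (shrinking to the normal locus of $U$, or normalizing). Then $[m]\sigma(z)\in\mathcal{B}_z$, so $\overline{\Z i_r(\tilde z)}\subset[m]^{-1}\mathcal{B}_z$, whose identity component is $\mathcal{B}_z$. Hence $\mathcal{B}_z=\mathcal{A}_z$, so $\mathcal{B}$ has relative dimension $g$ and $\mathcal{B}=\mathcal{A}_U$. Therefore $Y=\mathcal{A}_U$.
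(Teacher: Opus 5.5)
Your argument is correct. For the existence of $s$ it is the same as the paper's: both rest on étale-local sections of a smooth surjection (EGA IV 17.16.3), and your local-structure sketch is the proof of that corollary.

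The propagation step is where you take a genuinely different route. The paper argues by a bare dimension count. With $\mathcal{B}\colonequals\overline{\Z i_r(s(U))}$, it asserts that $\dim\mathcal{B}<\dim\mathcal{A}_U$ would force $\dim\mathcal{B}_z<\dim\mathcal{A}_z$, and then finishes with a finite-index and irreducibility argument. That is short. But, as you observe, the inference runs against semicontinuity of fiber dimension. It is only valid once one knows $\mathcal{B}$ has fibers of constant dimension over $U$ (e.g.\ is a finite union of translates of an abelian subscheme), and the paper does not verify this.

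Your route supplies exactly the missing input:
\begin{enumerate}
\item you identify $\overline{\Z\sigma(U)}$ with the closure of $G=\overline{\Z\sigma(\eta)}$;
\item you extend the identity component $B_\eta$ of $G$ to an abelian subscheme over a normal base;
\item you use $[m]\sigma(U)\subset\mathcal{B}$ to convert the hypothesis at $z$ into $\mathcal{B}_z=\mathcal{A}_z$.
\end{enumerate}
The price is the extension theorem over normal bases. You can lighten it: extend only one nonzero endomorphism $\phi_\eta$ of $\mathcal{A}_\eta$ that kills $B_\eta$, using extension of homomorphisms of abelian schemes over a normal base (Faltings--Chai). Then $\phi\circ[m]\circ\sigma$ vanishes generically, hence identically, and $\phi_z\neq 0$ by rigidity. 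So $\Z\sigma(z)$ lies in the proper closed subgroup $[m]^{-1}(\ker\phi_z)$, a contradiction.

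Two points to tidy up.
\begin{enumerate}
\item Since $Z\subset\mathcal{M}_g$ need not be normal at $z$, ``shrinking to the normal locus'' may discard $z$. Also, an étale neighbourhood of a non-normal point can be connected but reducible, so ``connected component'' should be ``irreducible component through the lift of $z$''. Use the normalization $\nu\colon\widetilde U\to U$ instead and run the argument at a point above $z$. Then descend: the finite surjection $\mathcal{A}_{\widetilde U}\to\mathcal{A}_U$ is closed and carries $\Z\,\sigma\nu(\widetilde U)$ onto $\Z\sigma(U)$, so it maps $\overline{\Z\,\sigma\nu(\widetilde U)}$ onto $\overline{\Z\sigma(U)}$.
\item Your remarks that $Y$ is a subgroup scheme and that its components are the closures of the $[N]\sigma(U)$ are inaccurate. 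Those sets are already closed, and there are infinitely many of them. Nothing downstream uses either claim, so simply drop them.
\end{enumerate}
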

\begin{proof}
Since $\mathcal{M}_{g,r}$ and $\mathcal{M}_g$ are smooth Deligne--Mumford stacks over $\Spec \Z$ by \cite[Theorem~5.2]{DMstack}, we can pass to \'{e}tale neighbourhoods $V$ of $\tilde{z}$ and $U$ of $z$ respectively and assume that $f \colon V \rightarrow U$ is a smooth surjective morphism of schemes. The existence of $s$ now follows from \cite[Corollaire~17.16.3~(ii)]{EGAIV4} and the proof of \cite[Corollaire~17.16.1]{EGAIV4}.

Let $\mathcal{B} \colonequals \overline{\Z i_r(s(U))}$, and let $B \colonequals \overline{\Z i_r(\tilde{z})}$. Since $z \in U$ and $s(z) = \tilde{z}$, it follows that $i_r(\tilde{z}) \in i_r(s(U))$ and hence $B \subset \mathcal{B}_z \subset \mathcal{A}_z$. Assume $B = \mathcal{A}_z$. Then $B = \mathcal{B}_z =  \mathcal{A}_z$.  We want to show that $\mathcal{B} = \mathcal{A}_U$. If $\dim \mathcal{B} < \dim \mathcal{A}_U = \dim \mathcal{A}$, then $\dim \mathcal{B}_z < \dim \mathcal{A}_z$, contradicting $\mathcal{B}_z = \mathcal{A}_z$. So $\dim \mathcal{B} = \dim \mathcal{A}$, and in this case $\mathcal{B}$ has finite index in $\mathcal{A}_U$. Since $\mathcal{B}$ is a closed subgroup variety of $\mathcal{A}_U$, if $\mathcal{B}$ has finite index in $\mathcal{A}_U$, it is also open in $\mathcal{A}_U$ and since $\mathcal{A}_U$ is irreducible, it would also follow that $\mathcal{B} = \mathcal{A}_U$. 
\end{proof}

\begin{prop}\label{prop:AJgenerates} 
Assume $g \geq 2, r \geq 1$ and that $C$ is defined over $\Qbar$. Let $Z,\vec{a}, Z_r, \mathcal{A}, \pi, i_r$ be as in \Cref{lem:secthroughpt}. Then as we vary over nonempty \'{e}tale open subsets $U$ of $Z$, there are infinitely many sections $s \colon U \rightarrow (Z_r)_U$ of $f_U$ such that
$\overline{\Z i_r(s(U))} = \mathcal{A}_U$.
\end{prop}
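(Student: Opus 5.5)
Fix a point $z \in Z(\Qbar)$ with corresponding curve $C = C_z$ defined over $\Qbar$, of genus $g \geq 2$. By \Cref{lem:secthroughpt}, it suffices to produce infinitely many points $\tilde{z} \in Z_r(\Qbar)$ lying over $z$ with $\overline{\Z i_r(\tilde{z})} = \mathcal{A}_z = \Jac(C)$. Each such $\tilde{z}$ then yields an étale neighbourhood $U$ of $z$ and a section $s \colon U \rightarrow (Z_r)_U$ through $\tilde{z}$ with $\overline{\Z i_r(s(U))} = \mathcal{A}_U$. Distinct $\tilde{z}$ over the same $z$ give distinct sections, since $s(z) = \tilde{z}$. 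So infinitely many such $\tilde{z}$ give infinitely many sections.

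The fiber of $f$ over $z$ is the space of $r$-tuples of pairwise distinct points of $C$, modulo the level structure identifications, which are harmless since they are étale-local. On this fiber, $i_r$ restricts to $(P_1,\ldots,P_r) \mapsto \sum_i a_i\bigl[(2g-2)P_i - K_C\bigr]$. This is exactly the map $i_r^{\vec{a}}$ of \Cref{lem:curvesinJacobians} for the morphism $i \colon C \rightarrow J$, $x \mapsto [(2g-2)x - K_C]$. That morphism is $[2g-2]\circ i_{P}$ translated by a point, for any degree $1$ divisor $P$.

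The key input is \Cref{lem:infintepoints}, and I should make sure it applies to this $i$. Its proof only uses that $j_r(C) = a + [a_1](i(C))$ is a translate of a curve not contained in a translate of a proper abelian subvariety. Here $i(C)$ is itself a translate of $[2g-2](i_P(C))$, so $j_r(C)$ is a translate of $[a_1(2g-2)](i_P(C))$ with $a_1(2g-2) \neq 0$. Hence \Cref{lem:abeliantranslate} applies, and then \Cref{thm:Habeggerboundedheight} does.

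The construction is then as follows. Fix pairwise distinct $P_2,\ldots,P_r \in C(\Qbar)$. \Cref{lem:infintepoints} gives infinitely many $P \in C(\Qbar)$ with $\overline{\Z i_r(P,P_2,\ldots,P_r)} = J$. Discarding the finitely many $P$ equal to some $P_j$ leaves infinitely many valid points $\tilde{z} = (C,P,P_2,\ldots,P_r)$ of $Z_r(\Qbar)$, with any lift of the level structure. Applying \Cref{lem:secthroughpt} to each of them finishes the proof.

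The main obstacle I expect is bookkeeping rather than mathematics. One issue is whether $Z(\Qbar)$ is nonempty; it is, since $Z$ is an integral variety defined over $\Qbar$. The other is confirming that \Cref{lem:infintepoints}, stated for a general $i$ with $i(C)$ generating, really applies to the weighted map $x \mapsto (2g-2)x - K_C$. I would handle this by the translation argument above rather than relying on the lemma's hypotheses verbatim.
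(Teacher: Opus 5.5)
Your proposal is correct and is essentially the paper's proof. You fix $z\in Z(\Qbar)$, use \Cref{lem:infintepoints} to get infinitely many $\tilde z$ over $z$ with $\overline{\Z i_r(\tilde z)}=\Jac(C)$, and spread each one out to a section via \Cref{lem:secthroughpt}; your extra checks are details the paper leaves implicit, namely that $x\mapsto[(2g-2)x-K_C]$ is a translate of $[2g-2]\circ i_P$ (so \Cref{lem:abeliantranslate} genuinely applies) and that discarding $P\in\{P_2,\dots,P_r\}$ keeps the points distinct.
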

\begin{proof}
Let $C$ be a curve corresponding to a point $z \in Z$. Since $C$ is defined over $\Qbar$, by \Cref{lem:infintepoints}, there are infinitely many points $\tilde{z} \in (Z_r)_z(\Qbar)$ corresponding to an $r$-tuple of pairwise distinct points $(P_1,P_2,\ldots,P_r) \in C^r$ such that $\overline{\Z i_r(\tilde{z})} = \Jac(C)$. Choosing $s$ as in \Cref{lem:secthroughpt} for each such $\tilde{z}$ now finishes the proof.
\end{proof}

\begin{proof}[Proof of \Cref{T:CerMMgen}]
Let $\pi \colon \mathcal{A} \rightarrow Z_r$ be the abelian scheme  as in \Cref{prop:AJgenerates}. 
For each $i$ with $1 \leq i \leq r$, let $P_i$ be the $i$th branch point for a degree $d$ cover  $\varphi \colon \widetilde{C} \rightarrow C$ with monodromy representation $\rho$ and ramification divisor $R_{\varphi}$. Define $a_i$ to be the coefficient of $P_i$ in the divisor $d(2g-2)\varphi_*(R_{\varphi}) \in \Pic^0(C)$, and let $i_r \colonequals i_r^{\vec{a}}$ be as in \Cref{prop:AJgenerates}. Then by the definition of $i_r$ and \Cref{lem:relcanpush}, it follows that if $D_{\varphi}$ is the relative canonical shadow for a cover $\varphi$ as above, then $\varphi_*(D_{\varphi}) = i_r(P_1,\ldots,P_r)$.

By \Cref{prop:AJgenerates}, as we vary over nonempty \'{e}tale open subsets $U$ of $Z$, there are infinitely many sections $s \colon U \rightarrow (Z_r)_U$ such that  that $\overline{i_r(s(U))} = \mathcal{A}_U$. Fix such a section $s$. Since $\dim i_r(s(U)) = \dim U = \dim Z< g$ by assumption and since $\overline{\Z i_r(s(U))} = \mathcal{A}_U$ by choice of $s$, \Cref{thm:RelManMum} implies that $i_r(s(U)) \cap \mathcal{A}_{\tors}$ is contained in a Zariski closed subscheme of $X$ of positive codimension, or equivalently that for $x_0 \colonequals (C,P_1,\ldots,P_r)$ in a nonempty \'{e}tale open subset $U$ of $Z$, the point $i_r(P_1,\ldots,P_r)$ is nontorsion in the Jacobian of $C$. By \Cref{prop:rcsceresa} and the equality $\varphi_*(D_{\varphi}) = i_r(P_1,\ldots,P_r)$, it follows that $\Cer(\widetilde{C})$ is nontorsion for any $[\widetilde{C} \rightarrow C]$ in $\mathcal{H} \cap \tar^{-1}(U) \subset \tar^{-1}(Z)$. Since $\tar$ is a finite flat morphism, it follows that  $\mathcal{H} \cap \tar^{-1}(U)$ is a nonempty open dense  subset of the irreducible subvariety $\mathcal{H}$. 
\end{proof}

\section{Explicit families of curves with small Ceresa torsion locus}\label{sec:one-parameter}

\subsection{A $1$-parameter $D_{12}$-family.}

Recall the $1$-parameter family of smooth projective  genus $6$ curves in \Cref{T:CerMM2}
with affine model given by
\[ C_t: y^{12} = \left(\frac{x + 1}{x - 1} \right)^3 \left(\frac{x + t}{x - t} \right)^4.\] 
Fix a primitive $24$th root of unity $\zeta_{24}$, and for $m \mid 24$, let $\zeta_m \colonequals \zeta_{24}^{24/m}$. There is an action of the dihedral group $D_{12}$ on $C_t$ generated by the automorphisms 
\[ \sigma(x,y) \colonequals (- x, y^{- 1}), \quad \quad \tau(x, y) \colonequals (x, \zeta_{12}y).\] 

We prove Theorem~\ref{T:CerMM2} in this subsection, namely that there are at most finitely many $t \in \C$ for which $\Cer(C_t)$ is torsion. The strategy is to push forward the relative canonical shadows for the covers $C_t \rightarrow C_t/\langle a \rangle$, with $a \in \{\sigma,\tau^3 \sigma\}$ to the quotient $C_t/\langle \tau^6,\tau^3 \sigma \rangle$ to produce two linearly independent sections of an elliptic fibration, and  then appeal to the Masser--Zannier unlikely intersections theorem for finiteness of the simultaneous torsion locus of these sections.

\begin{center}
\begin{tikzcd}[%
        cells={shape=rectangle}
        ]%[row sep=8pt]
   &[-12pt] C_t \arrow[out=185,in=165
   ,loop,looseness=10, "\tau"]
   \arrow[dl, "\varphi" above, end anchor=100] \arrow[dr, "\varphi'" , end anchor=110] \arrow[d, "\psi"]
   &[-12pt]&[12pt]  &[-12pt] \langle e \rangle  \arrow[dl, no head, end anchor=90] \arrow[dr, no head, end anchor=90] \arrow[d, no head] &[-12pt] \\
   \begin{matrix}  X_t \colonequals \\ C_t/ \langle\sigma\rangle \end{matrix} & \begin{matrix}  Y_t
   \colonequals \\ C_t/\langle\tau^6\rangle \end{matrix} 
   \arrow[out=170,in=160,loop,looseness=10, "\overline{\tau}" {yshift=-5pt}, shift right=3]\arrow[d, "\pi_E"]
   & \begin{matrix} X_t' \colonequals \\  C_t/ \langle{\tau\sigma}\rangle\end{matrix}  & \begin{matrix}  \langle \sigma \rangle\\ \simeq \Z/2\Z \end{matrix}
   %\arrow[rdd, no head]
   & \begin{matrix}\langle \tau^6 \rangle\\ \simeq \Z/2\Z \end{matrix} 
   %\arrow[d, no head]  
   \arrow[d, no head]
   &\begin{matrix} \langle \tau\sigma \rangle\\ \simeq \Z/2\Z \end{matrix} 
   %\arrow[ldd, no head]
   \\
   &\begin{matrix} E_t:=\\C_t/\langle \tau^6, \tau^3 \sigma \rangle \end{matrix} 
   &&&\begin{matrix} \langle \tau^6, \tau^3 \sigma \rangle \\ \simeq \Z/2\Z \times \Z/2\Z\end{matrix} 
\end{tikzcd}   
\end{center}

We begin by writing down explicit affine models for the relevant quotient curves. Let $\varphi, \varphi'$ and $\psi$ be maps from the curve $C_t$ to its quotients $X_t,X_t',Y_t$ by $\sigma, \tau \sigma$ and $\tau^6$ respectively.  Since $\tau^6$ generates the center of $D_{12}$, the action of $\sigma$ and $\tau$ descend to automorphisms $\bar{\sigma}$ and $\bar{\tau}$ of the quotient $Y_t$. Consider the automorphism $\overline{\tau}^3 \overline{\sigma}$ of $Y_t$ and let $\pi_{E} \colon Y_t \rightarrow Y_t/\langle \bar{\tau}^3\bar{\sigma} \rangle \equalscolon E_t$ denote the corresponding quotient map. We now write down explicit equations for $Y_t$ and $E_t$.

Define rational functions $y_1,u,w$ by
\[ y_1 \colonequals y^2, \quad u \colonequals y_1^2 \left(\frac{x - 1}{x + 1}\right)\left(\frac{x - t}{x + t}\right), \quad \textup{ and }   w \colonequals \left(\frac{x+t}{x-t}\right).\]
Then $y_1,u,w$ are fixed by $\tau^6$ and $Y_t \colonequals C_t / \langle \tau^6 \rangle$ is defined by the affine equation
\[ y_1^6 = \left(\frac{x + 1}{x - 1}\right)^3
\left(\frac{x + t}{x - t}\right)^4, \text{ or equivalently by  } u^3=w. \]

Similarly, define rational functions $v,z$ by
\[ v \colonequals u+u^{-1}, \quad z \colonequals \left(\frac{y_1-y_1^{-1}}{R(v)}\right)h(v), \textup {\quad where} \] 
\begin{align*} 
%S(v) &\colonequals (v^3t + v^3 - v^2t - v^2 - 2vt - 2v + 2t) \\ 
R(v) &\colonequals v(v-1)(v+2)(t+1)-2 \\ h(v) &\colonequals (v^3-3v+2)t^2-(v^3-3v-2).\end{align*} 
Since $\tau^3\sigma(y_1) = -y_1^{-1}$ and $\tau^3 \sigma(u) = u^{-1}$, it follows that  $v,z$ are invariant under both $\tau^6$ and $\tau^3 \sigma$, and an affine equation for the genus $1$ curve $E_t \colonequals Y_t / \langle \bar{\tau}^3 \bar{\sigma} \rangle$ is given by
\[ z^2 = (v-2) h(v). \]
We make $E_t$ an elliptic curve over $\Q(t)$ by choosing the point $(v=2,z=0)$ as the identity. 

Fix an element $c$ in an algebraic closure of $\overline{\mathbb{Q}(t)}$ that satisfies $ c^3 = \frac{1+t}{1-t}$, and for $i=1,2,3$, consider the points of $Y_t$ defined by
\begin{align*} A_{i,t} &\colonequals \left(x=1,(x-1)y_1^2=2\zeta_3^i c^4 \right), \ \textup{or equivalently by}\ \left( u = \zeta_3^i c, w = \frac{1+t}{1-t}  \right), \textup{ and, } \\
B_{i,t} &\colonequals \left(x=-1,\frac{y_1^2}{(x+1)}=\frac{-\zeta_3^i}{2c^4} \right), \  \textup{or equivalently by}\ \left( u = \frac{\zeta_3^i}{c}, w = \frac{1-t}{1+t} \right) .\end{align*}

\begin{lem}\label{lem:branchppoints}
The ramification divisors $R_{\psi}, R_{\varphi}, R_{\varphi'}, R_{\pi_{E}}$ for   $\psi, \varphi, \varphi'$ and $\pi_E$ are
\begin{align*} 
R_{\psi} &= \sum_{i=1}^3 \left( \left(x=1,(x-1)y^4=2\zeta_3^i c^4 \right) + \left(x=-1,\frac{y^4}{(x+1)}=\frac{-\zeta_3^i}{2c^4} \right) \right)
\\
R_{\varphi} &= (1/x=0, y=1) + (1/x=0,y=-1), \\
R_{\varphi'} &= (x=0, y=\zeta_{24}) + (x=0,y=-\zeta_{24}), \textup{ and,} \\
R_{\pi_{E}} &= (x = 0, y_1 = \zeta_{4}) + (x = 0, y_1 = - \zeta_{4}).
\end{align*}

\end{lem}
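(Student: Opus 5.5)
For each of the four quotient maps, which are all cyclic of order $2$, the ramification divisor is the reduced divisor of fixed points of the corresponding involution. The residue characteristic is zero, so all ramification is tame and $R = \sum_{P \text{ fixed}} (e_P - 1)P = \sum_{P \text{ fixed}} P$. The lemma therefore reduces to listing the fixed points of $\tau^6$, $\sigma$ and $\tau\sigma$ on $C_t$, and of $\bar\tau^3\bar\sigma$ on $Y_t$. I would cross-check each list against Riemann--Hurwitz using the genera $g(C_t)=6$, $g(E_t)=1$ and the genera of $Y_t$, $X_t$, $X_t'$, which can be read off from their affine models.

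\textbf{Fixed points at affine $x$.} I would treat $C_t$ as the cyclic degree-$12$ cover of $\mathbb{P}^1_x$ given by $y^{12}=f(x)$, with $f$ having valuations $3,-3,4,-4$ at $x=-1,1,-t,t$ and no other zeros or poles.

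For $\tau^6\colon y\mapsto -y$, the fixed points are points where $y\in\{0,\infty\}$ or where $x$ is a branch point of $C_t\to\mathbb{P}^1_x$. Over $x=\pm t$ the valuation is $\pm4$, so the fibre has $4$ points, each with stabilizer $\langle \tau^4\rangle$ of order $3$. That stabilizer does not contain $\tau^6$, so these points are not fixed. Over $x=\pm1$ the valuation is $\pm3$, so the fibre has $3$ points, each with stabilizer $\langle\tau^3\rangle \ni \tau^6$. This gives the $6$ points of $R_\psi$. I would then identify them with the stated coordinates by computing the local values of $(x-1)y^4$ and $y^4/(x+1)$, which are determined by the cube roots of $\frac{1\pm t}{1\mp t}$ and hence by $c$ and $\zeta_3^i$. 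Riemann--Hurwitz then gives $10=2(2g(Y_t)-2)+6$, so $g(Y_t)=2$; this is consistent with $u^3=w$ being a genus-$2$ curve, because $w$ is a degree-one function of $x$.

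For $\sigma\colon(x,y)\mapsto(-x,y^{-1})$, a fixed point needs $x=-x$, so $x\in\{0,\infty\}$. At $x=\infty$ we have $f(\infty)=1$, so the fibre consists of the points with $y^{12}=1$, and $y=y^{-1}$ forces $y=\pm1$. At $x=0$ we have $f(0)=(-1)^3(-1)^4=-1$, so $y^{12}=-1$ and $y^2=1$ is impossible; there are no fixed points there. This gives $R_\varphi$.

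For $\tau\sigma$ the fixed points again lie at $x\in\{0,\infty\}$ and satisfy $y=\zeta_{12}y^{-1}$ (or its conjugate, depending on the order of composition), so $y^2=\zeta_{12}$. At $x=\infty$ this contradicts $y^{12}=1$. At $x=0$ it is compatible with $y^{12}=-1$ and gives $y=\pm\zeta_{24}$. This gives $R_{\varphi'}$. Riemann--Hurwitz gives $10=2(2g-2)+2$, so both $X_t$ and $X_t'$ have genus $3$, as expected.

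\textbf{Fixed points of $\bar\tau^3\bar\sigma$ on $Y_t$.} This involution acts by $(x,y_1)\mapsto(-x,-y_1^{-1})$. Fixed points need $x\in\{0,\infty\}$ and $y_1^2=-1$. At $x=\infty$ we have $y_1^6=1$, a contradiction. At $x=0$ we have $y_1^6=-1$, which is compatible, giving $y_1=\pm\zeta_4$. One must also check that no fixed point lies over a point of $Y_t$ that is ramified over $\mathbb{P}^1_x$ (at $x=\pm1,\pm t$), which holds because $-x\neq x$ there. Riemann--Hurwitz gives $2=2\cdot0+2$, confirming that $g(E_t)=1$.

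\textbf{Main obstacle.} The delicate step is the first one. It requires the local analysis over $x=\pm1$ to decide exactly which points in the fibres are fixed. It also requires matching the abstract description of these points with the explicit labels, through the precise normalisation of $(x-1)y^4$ in terms of $c$ and $\zeta_3^i$. I would do this using local uniformisers $x\mp1$ and checking consistency with the definitions of $A_{i,t}$ and $B_{i,t}$.
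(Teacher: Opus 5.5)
Your proposal is correct and follows the same route as the paper, which identifies each ramification divisor with the fixed locus of the relevant involution ($\tau^6$, $\sigma$, $\tau\sigma$, $\bar\tau^3\bar\sigma$) and reads it off from the explicit equations; your stabilizer analysis over $x=\pm1,\pm t$, the checks at $x\in\{0,\infty\}$, the matching of the points over $x=\pm1$ with the labels via $c^3=\frac{1+t}{1-t}$, and the Riemann--Hurwitz cross-checks all go through. One side remark is wrong but harmless: $u^3=w$, with $w$ a degree-one function of $x$, cuts out a rational curve (namely $C_t/\langle\tau^3\rangle$, since $u$ is $\tau^3$-invariant), not a genus-$2$ curve, so it is not evidence for $g(Y_t)=2$ --- your Riemann--Hurwitz computation is what actually establishes that.
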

\begin{proof}
These follow from the explicit equations for the curves $C_t,Y_t$ above, and the fact that the ramification points of $\varphi,\varphi', \pi_E$ are the fixed points of the automorphisms $\sigma(x,y) =(-x,y^{-1}), \tau \sigma(x,y) = (-x,\zeta_{12}y^{-1})$ and $\overline{\tau}^3 \overline{\sigma}(x,y_1) = (-x,-y_1^{-1})$ respectively. 
\end{proof}

\begin{lem}\label{lem:Ctrelcanonicalshadow}
Let $D_{\varphi}$ and $D_{\varphi'}$ be the relative canonical shadows for $\varphi$ and $\varphi'$.
Then
\begin{align*}\psi_*(D_\varphi) &= 20 \left(\frac{1}{x}=0,y_1=1 \right)-4K_{Y_t}-2\sum_{i=1}^3 (A_{i,t}+B_{i,t}), \textup{\  and,}\\
\psi_{*}(D_{\varphi'}) &= 20(x=0, y_1=\zeta_{12})-4K_{Y_t}-2\sum_{i=1}^3 (A_{i,t}+B_{i,t}).\end{align*}

\end{lem}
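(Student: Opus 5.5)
The plan is to compute both pushforwards directly from the Galois form of the relative canonical shadow (Remark~\ref{rem:RCspecial}\eqref{rem:Gal}). We then push forward along $\psi$ using Riemann--Hurwitz for $\psi$ and the explicit ramification divisors of Lemma~\ref{lem:branchppoints}. Throughout, all equalities are taken in $\Pic(Y_t)$, or as divisors where canonical divisors are chosen compatibly.

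\textbf{Step 1: the shadows on $C_t$.} Since $\varphi$ and $\varphi'$ are quotients by the involutions $\sigma$ and $\tau\sigma$, both are $\Z/2\Z$-Galois covers. Remark~\ref{rem:RCspecial}\eqref{rem:Gal} therefore gives
\[ D_\varphi = (2g_{C_t}-2)R_\varphi - \deg(R_\varphi)K_{C_t}. \]
Here $g(C_t)=6$, so $2g_{C_t}-2 = 10$, and Lemma~\ref{lem:branchppoints} gives $\deg R_\varphi = 2$. Hence $D_\varphi = 10R_\varphi - 2K_{C_t}$, and in the same way $D_{\varphi'} = 10R_{\varphi'} - 2K_{C_t}$.

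\textbf{Step 2: pushing forward the ramification parts.} Since $\psi$ is the quotient by $\tau^6\colon y\mapsto -y$, it sends $(x,y)\mapsto (x,y_1=y^2)$.
\begin{itemize}
\item The two points $(1/x=0,y=\pm1)$ of $R_\varphi$ both map to $(1/x=0,y_1=1)$. Hence $\psi_*R_\varphi = 2(1/x=0,y_1=1)$, which contributes the term $20(1/x=0,y_1=1)$.
\item Similarly, $(x=0,y=\pm\zeta_{24})$ both map to $(x=0,y_1=\zeta_{12})$. Hence $\psi_*R_{\varphi'} = 2(x=0,y_1=\zeta_{12})$.
\end{itemize}
I will check that these images are genuine points of $Y_t$ on the stated charts, using the affine model $y_1^6=\cdots$. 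One must also confirm that these images are not themselves branch points of $\psi$, so that the pushforward is just the sum of the images. This holds because the $x$-coordinates $\infty$ and $0$ differ from $\pm1$.

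\textbf{Step 3: pushing forward the canonical class.} Since $\psi$ is separable of degree $2$, Riemann--Hurwitz gives $K_{C_t} = \psi^*K_{Y_t} + R_\psi$. Using $\psi_*\psi^* = 2$, we get
\[ \psi_*K_{C_t} = 2K_{Y_t} + \psi_*R_\psi. \]
As a consistency check, $10 = 2(2g_{Y_t}-2) + 6$ gives $g(Y_t)=2$, in agreement with $\deg R_\psi = 6$.

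By Lemma~\ref{lem:branchppoints}, $R_\psi$ is the sum of six simple points. Each has multiplicity one, since the cover is tamely ramified of degree $2$. Since $y_1 = y^2$, we have $y^4 = y_1^2$, so these six points map to the points $A_{i,t}$ and $B_{i,t}$ as defined via $(x-1)y_1^2 = 2\zeta_3^ic^4$ and $y_1^2/(x+1) = -\zeta_3^i/(2c^4)$. Therefore $\psi_*R_\psi = \sum_{i=1}^3(A_{i,t}+B_{i,t})$, and
\[ -2\psi_*K_{C_t} = -4K_{Y_t} - 2\sum_{i=1}^3 (A_{i,t}+B_{i,t}). \]
Combining this with Step 2 yields both formulas.

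\textbf{Main obstacle.} The only real care is in Step 3: matching the points of $R_\psi$ bijectively with $A_{i,t}$ and $B_{i,t}$. This requires checking that the coordinates $(x-1)y^4$ and $y^4/(x+1)$ are the right local invariants at $x=\pm1$ to distinguish the three points over each of these $x$-values. I would verify this by noting that the equation $y_1^6 = (\tfrac{x+1}{x-1})^3(\tfrac{x+t}{x-t})^4$ forces $(x-1)y_1^2$ to take exactly the three values $2\zeta_3^ic^4$ at $x=1$. The point $x=-1$ is handled symmetrically.
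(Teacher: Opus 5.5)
Your proposal is correct and follows essentially the same route as the paper: the Galois formula gives $D_\varphi = 10R_\varphi - 2K_{C_t}$ (and likewise for $\varphi'$), you push the ramification part forward using $y_1=y^2$, and Riemann--Hurwitz with $\psi_*\psi^*=2$ gives $\psi_*K_{C_t} = 2K_{Y_t}+\psi_*R_\psi$, whose ramification part you match with $\sum_i (A_{i,t}+B_{i,t})$. One minor point: your check that the images are not branch points of $\psi$ is unnecessary, since $\psi_*$ of a closed point over an algebraically closed field is its image point whether or not it is ramified, and what actually matters is only that the two points of $R_\varphi$ (resp. $R_{\varphi'}$) are distinct and are swapped by $\tau^6$.
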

\begin{proof}
Since $g_{C_t}=6$ and since $\varphi$ is Galois, using the formula in Remark~\ref{rem:RCspecial}~\eqref{rem:Gal}, we get
\[D_{\varphi}=10 R_{\varphi}-2K_{C_t}.\]

Since $y_1 = y^2$ and since $R_{\varphi} = (\frac{1}{x}=0, y=1) + (\frac{1}{x}=0, y=-1)$ by \Cref{lem:branchppoints}, it follows that \[\psi_{*}(R_{\varphi}) = 10 \psi_{*} R_{\varphi}-2 \psi_{*}(K_{C_t}) = 20 \left(\frac{1}{x}=0, y_1=1 \right) -2 \psi_{*}(K_{C_t}).\] Since $\deg \psi = 2$, the Riemann--Hurwitz formula and the projection formula imply that  
\[ \psi_*(K_{C_t}) = \psi_*(\psi^*(K_{Y_t}) + R_{\psi}) = 2K_{Y_t} + \psi_*(R_{\psi}).\]  Putting everything together we get
$$\psi_*(D_{\varphi}) = 20 \left(\frac{1}{x}=0, y_1=1 \right) - 4K_{Y_t} - 2\psi_*(R_{\psi}).$$
The desired expression for $\psi_*(D_{\varphi})$ now follows from $y_1=y^2$ and \Cref{lem:branchppoints}. The expression for $\psi_{*}(D_{\varphi'})$  can similarly be obtained using the formula for $R_{\varphi'}$ (\Cref{lem:branchppoints}) in place of $R_{\varphi}$.
\end{proof}

\begin{lem}\label{lem:Ctshadowspushforward}
Let $D_1(t) \colonequals (\pi_{E})_* (\overline{\tau}^2)_* \psi_*(D_{\varphi})$ and $D_2(t) \colonequals (\pi_{E})_*\psi_*
(D_{\varphi'})$. Then $D_1(t)$ and $D_2(t)$ are linearly independent sections of the elliptic fibration $E_t$.
\end{lem}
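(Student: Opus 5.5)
The plan is to make $D_1(t)$ and $D_2(t)$ explicit as points on $E_t$ over the function field $\overline{\Q(t)}$. Then I would show that no nontrivial relation $mD_1(t)+nD_2(t)=0$ with $(m,n)\neq(0,0)$ can hold identically in $t$, by specializing to a single well-chosen value $t_0$. Linear dependence over the generic fiber would specialize to a dependence (with the same $m,n$) at every good $t_0$. It therefore suffices to exhibit one $t_0\in\Q\setminus\{0,\pm1\}$, with $E_{t_0}$ smooth, at which the specialized points are independent in $E_{t_0}(\overline{\Q})$.

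First, compute the pushforwards. By \Cref{lem:Ctrelcanonicalshadow} both $\psi_*(D_\varphi)$ and $\psi_*(D_{\varphi'})$ share the term $-4K_{Y_t}-2\sum_i(A_{i,t}+B_{i,t})$.
\begin{itemize}
\item $\overline{\tau}$ permutes the $A_{i,t}$ and the $B_{i,t}$ among themselves (it multiplies $u$ by a cube root of unity), and it preserves the canonical class. So applying $(\overline{\tau}^2)_*$ only moves the point $(1/x=0,y_1=1)$, sending it to $(1/x=0,y_1=\zeta_3)$.
\item Under $\pi_E$, the common part becomes $-4(\pi_E)_*K_{Y_t}-2\sum_i \pi_E(A_{i,t})-2\sum_i\pi_E(B_{i,t})$. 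Here $(\pi_E)_*K_{Y_t}=(\pi_E)_*(\pi_E^*K_{E_t}+R_{\pi_E})=(\pi_E)_*R_{\pi_E}$, which is twice the branch point at $x=0$, $y_1=\pm\zeta_4$.
\item Since $\tau^3\sigma$ swaps $A_{i,t}$ and $B_{j,t}$ (as $u\mapsto u^{-1}$), the images satisfy $\pi_E(A_{i,t})=\pi_E(B_{j,t})$ for suitable pairs. Their $v$-coordinates are $v=\zeta_3^ic+\zeta_3^{-i}c^{-1}$.
\end{itemize}
This yields
\[D_1(t)=20\,P_1-8\,Q-4\sum_{i=1}^3 S_i,\qquad D_2(t)=20\,P_2-8\,Q-4\sum_{i=1}^3 S_i\]
as degree-zero divisors on $E_t$. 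Here $P_1,P_2,Q,S_i$ are explicit points whose $(v,z)$-coordinates I read off from $u$, $y_1$ and the definitions of $v,z$. I would then convert these to points of $E_t$ using the origin $(v=2,z=0)$.

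Next, simplify the sum. The three points $S_i$ are Galois conjugate over $\Q(t)$, since they are permuted by $c\mapsto\zeta_3 c$. Hence $\sum_i S_i$ is the divisor cut out by a cubic in $v$ (the minimal polynomial of $c+c^{-1}$) together with a linear condition in $z$. Using the chord-and-tangent relations, I expect $\sum_i S_i$ to reduce to a $\Q(t)$-rational point. Both $D_1(t)$ and $D_2(t)$ are then $\Q(t)$-rational or nearly so, and I can work in $E_t(\Q(t))$. It would be cleanest to check independence of $D_1(t)-D_2(t)=20(P_1-P_2)$ and $D_2(t)$.

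For independence, I would specialize at a small rational $t_0$, say $t_0=2$ or $t_0=3$, chosen so that $E_{t_0}$ has good reduction at several small primes. At $t_0$ I would verify two things:
\begin{itemize}
\item Each specialized point has infinite order. Reduce modulo two primes of good reduction: torsion must inject into both reductions, which bounds the torsion order, and a large multiple is then checked to be nonzero. Alternatively, compute its Néron--Tate height and see that it is positive.
\item The regulator, i.e. the determinant of the $2\times 2$ height pairing matrix, is nonzero.
\end{itemize}
A non-height alternative is to find a prime $p$ where the reductions generate a subgroup of $E_{t_0}(\mathbb{F}_p)$ that cannot come from a rank-one group plus the (small) torsion. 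This amounts to checking images in $E(\mathbb{F}_p)/\ell E(\mathbb{F}_p)$ for several $p$ to rule out all $(m,n)$ modulo $\ell$, combined with a saturation argument.

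The main obstacle is the bookkeeping in the first step. It is easy to get the images of $P_1$, $P_2$, $Q$ and the $S_i$ wrong because the coordinate changes $u\mapsto v$ and $y_1\mapsto z$ involve $R(v)$ and $h(v)$. I would therefore verify each image point by substituting into $z^2=(v-2)h(v)$, and do the specialization computation by computer algebra, for example a Magma or Sage height computation.
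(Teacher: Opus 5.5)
Your route is the paper's route. You compute $D_1(t)$ and $D_2(t)$ explicitly in $(v,z)$-coordinates, simplify using linear equivalences on $E_t$, and certify independence by specializing at a single parameter. The paper does exactly this, using $t=7/9$ and a Magma check, and the specialization argument works for the reason you give.

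There is one bookkeeping error to fix before running the computation: $(\pi_E)_*R_{\pi_E}$ is not twice a single branch point.
\begin{itemize}
\item A branch point of a double cover has only one preimage. So the two ramification points $(x=0,y_1=\pm\zeta_4)$ of $\pi_E$ map to two \emph{distinct} points $Q^{\pm}=(-2,\pm 4\zeta_4)$. The common term is therefore $-4Q^{+}-4Q^{-}$, not $-8Q$.
\item The origin $O=(2,0)$ is a ramification point of the degree-$2$ map $v$. Hence $z\mapsto -z$ is $[-1]$ on $E_t$, and $Q^{+}+Q^{-}\sim 2O$.
\item Your $S_i$ are the points with $h(v)=0$ and $z=0$, i.e.\ the nontrivial $2$-torsion points. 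Hence $2\sum_i S_i\sim 6O$.
\end{itemize}
Together these give the clean forms
\[
D_1(t)=20\bigl((-1,2t(2\zeta_3+1))-O\bigr), \qquad D_2(t)=20\bigl((1,-2\zeta_4)-O\bigr).
\]
With your $-8Q$, both sections would be shifted by $-8(Q-O)$, which need not be torsion. Your specialization check would then be testing the wrong pair of points.

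A minor point: these sections are defined over $\Q(\zeta_{12})(t)$, not over $\Q(t)$. The height and regulator computation at your chosen $t_0$ should therefore be carried out over $\Q(\zeta_{12})$.
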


\begin{proof}
We compute $D_1(t)$ and $D_2(t)$ as a sum of points on $E$ expressed using the $(v,z)$-coordinates on $E$. If $x=0$ and $y=\pm \zeta_{4}$, then $y_1^2 = u = -1, w = -1, v = -2, h(v) = 4, z = \pm  4\zeta_4$.  
Combining this with the Riemann--Hurwitz formula and \Cref{lem:branchppoints}, we get 
\[(\pi_{E})_*(K_{Y_t}) = (\pi_{E})_*(R_{\pi_{E}})=(-2,4\zeta_4)+(-2,-4\zeta_4).\] 
If $x = \pm 1$, then $w = \frac{\pm 1+t}{\pm 1-t}, u = (\zeta_3^i c)^{\pm 1}, v = \zeta_3^i c + \frac{1}{c}\zeta_3^{-i}, h(v) = 0$  and $z=0$.
Combining this with $\pi_E(x=0,y_1=\zeta_{12}) = (1,-2 \zeta_4)$, \Cref{lem:Ctrelcanonicalshadow} and the definition of $D_2(t)$,  we get  
\[ D_2(t) = 20(1,-2\zeta_4) - 4(-2,4\zeta_4) - 4(-2,-4\zeta_4)- 4\sum_{i=0}^2 \left(c \zeta_3^i +\frac{1}{c\zeta_3^{i}},0 \right)
\] 
Combining this with

\begin{equation}\label{eqn:linearequiv} \begin{split} \divi \left(\frac{(v+2)^4}{(v-2)^2} \right) = 4(-2,4\zeta_4) + 4(-2,-4\zeta_4)-8(2,0), \text{ and,}\\
\divi \left(\frac{z^2}{(v-2)^3} \right) = 2\sum_{i=0}^2 \left(c \zeta_3^i +\frac{1}{c\zeta_3^{i}},0 \right) - 6(2,0), \end{split}\end{equation}
we get that 
\[ D_2(t) =20(1,-2\zeta_4) - 20(2,0) \in \Pic^0(E_t).\] 

We now compute $D_1(t).$ As $\overline\tau^2(x,y_1)
=(x,\zeta_3 y_1)$, it follows that $\overline{\tau}^2$ cyclically permutes the elements of the two sets $\{A_{1,t},A_{2,t},A_{3,t} \}$ and $\{B_{1,t},B_{2,t},B_{3,t} \}$. Since $\overline{\tau^2}$ is an automorphism and $K_{Y_t}$ is the canonical divisor, we further have $(\overline{\tau}^2)_*(K_{Y_t}) = K_{Y_t}$ in $\Pic^0(Y_t)$. Combining this with the expression for
$\psi_{*}(D_{\varphi})$ from Lemma \ref{lem:Ctrelcanonicalshadow}, we get
\[(\bar{\tau}^2)_*(\psi_*(D_\varphi)) = 20 \left(\frac{1}{x} = 0, y_1=\zeta_3 \right) - 4K_{Y_t}-2\sum_{i=1}^3A_{i,t}-2\sum_{i=1}^3B_{i,t} \in \Pic^0(Y_t).\]

Combining this with $\pi_{E}(1/x=0,y_1=\zeta_3) = (v=-1,z = 2t(2 \zeta_3+1))$, we get
\begin{align*}(\pi_{E})_*(\bar{\tau}^2)_*(\psi_*(D_\varphi)) &= 20(-1, 2t(2\zeta_3+1)) - 4(-2, 4\zeta_4)-4(-2,-4\zeta_4)-4\sum_{i=0}^2 (c \zeta_3^i +\frac{1}{c\zeta_3^{i}},0). \\ &= 20(-1, 2t(2\zeta_3+1)) - 20(2,0) \in \Pic^0(E_t), \text{ using \Cref{eqn:linearequiv}.}\end{align*}

We use code \cite{code} in Magma \cite{magma} to check that $D_1(t)$ and $D_2(t)$ are linearly independent by checking that the specializations to $t=7/9$ are linearly independent on $E_{7/9}$. 
\end{proof}

\begin{rem}
Since $\pi_{E}(1/x=0,y_1=1) = (v=2,z=0)$, a similar computation can be used to show that  $(\pi_{E})_* (\psi_*)(D_{\varphi}) = 0$, so the additional pushforward $(\overline{\tau}^2)_*$ in the definition of $D_1(t)$ is necessary to produce linearly independent sections of $E_t$.
\end{rem}

\begin{lem}\label{lem:StollTors} Let $D_1(t),D_2(t),E_t$ be as in \Cref{lem:Ctshadowspushforward}. There are no parameters $t \in \C \setminus \{0,\pm 1 \}$ such that $D_1(t)$ and $D_2(t)$ are both torsion points of $E_t$.
\end{lem}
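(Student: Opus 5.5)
Up to the factor $20$, $D_1(t)$ and $D_2(t)$ are the classes of $P_1(t)-O$ and $P_2(t)-O$, where
\[ O=(2,0),\quad P_1(t)=\bigl(-1,\,2t(2\zeta_3+1)\bigr),\quad P_2(t)=(1,-2\zeta_4) \]
on $E_t\colon z^2=(v-2)h(v)$. So it suffices to show that $P_1(t)$ and $P_2(t)$ are never simultaneously torsion on $E_t$ for $t\in\C\setminus\{0,\pm1\}$. The plan is to follow the route flagged in the introduction: use \cite[Corollary~22]{Stoll}, which gives an effective criterion, testable by finite computation, for determining the simultaneous torsion locus of two points on a one-parameter family of elliptic curves. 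The first step is to put $E_t$ in short Weierstrass form over $\Q(\zeta_{12})(t)$ by moving the rational $2$-torsion point $(2,0)$ to infinity. Then I write $P_1,P_2$ in these coordinates, check the hypotheses of Stoll's corollary (nonconstant $j$-invariant, sections defined over a number field, linear independence, which is \Cref{lem:Ctshadowspushforward}), and run the resulting computation in Magma alongside \cite{code}.

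Concretely, the approach I would try is via division polynomials and heights. If $P_2(t)$ has exact order $n$, then $t$ is a root of the $n$-th division polynomial evaluated at $v=1$. So $t$ is algebraic, and by Masser--Zannier (\Cref{rem:MasserZannier}) only finitely many such $t$ exist. Stoll's corollary makes this effective: it bounds the possible orders $n$ of $P_1(t)$ and $P_2(t)$ at a common torsion parameter. It does so by comparing the canonical local heights of $P_1$ and $P_2$ at places of $\overline{\Q}(t)$, or equivalently by exploiting that the two sections have distinct height pairings, with good reduction arguments bounding the relevant torsion orders. I would compute these bounds for our explicit sections, using the discriminant of $h(v)(v-2)$ to locate the bad fibres $t\in\{0,\pm1,\infty\}$ and any others.

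Once the orders are bounded by some explicit $N$, the remaining step is finite. For each pair $(m,n)$ with $m,n\le N$, I compute the resultant in $t$ of the polynomials expressing $[m]P_1(t)=O$ and $[n]P_2(t)=O$, using division polynomials in $v$ evaluated at $v=1$ and $v=-1$ and handling $z$ via the odd/even structure. Then I check that the only common roots lie in $\{0,\pm1\}$ or at parameters where $E_t$ is singular. A useful preliminary is to treat $P_2(t)=(1,-2\zeta_4)$ first: its torsion locus is cut out by polynomials in $t$ alone, since $v=1$ is constant. Reducing modulo small primes can then prune most candidate $t$ quickly.

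The main obstacle I expect is obtaining a small enough explicit order bound $N$ from Stoll's criterion, and keeping the division-polynomial resultants computationally tractable. If the bound is too large, my fallback is to specialize mod primes $p$: for any candidate $t$, the reductions of $P_1$ and $P_2$ at a prime of good reduction must have orders compatible with $m$ and $n$. This cuts the search drastically.
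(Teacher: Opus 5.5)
Your reduction to $P_1(t)=(-1,2t(2\zeta_3+1))-(2,0)$ and $P_2(t)=(1,-2\zeta_4)-(2,0)$ matches the paper. After that there is a genuine gap.

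\textbf{The missing step.} The whole plan rests on an explicit bound $N$ on the orders of $P_1(t)$ and $P_2(t)$ at a common torsion parameter, and nothing in your argument produces one. You attribute such a bound to \cite[Corollary~22]{Stoll}, obtained by comparing local heights or height pairings, but that is not what the cited result says. The hypotheses you plan to check (nonconstant $j$-invariant, linear independence) are also not its hypotheses.

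The finiteness from Masser--Zannier (\Cref{thm:RelManMum}, \Cref{rem:MasserZannier}) comes with no bound you could feed into a computation. It also concerns only the \emph{simultaneous} torsion locus, not the torsion locus of $P_2$ alone. So your loop over pairs $(m,n)$ with division-polynomial resultants can only exclude the orders you happen to check; it never closes into a proof. The mod-$p$ fallback fails for the same reason: a candidate $t$ is an algebraic number of unknown degree and the orders are unbounded, so reduction modulo primes gives constraints but no finite search. You identify the bound yourself as ``the main obstacle''; it is in fact the entire content of the lemma.

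\textbf{What the paper uses instead.} The missing idea is what Stoll's corollary actually provides, and it makes any order bound unnecessary. It is a statement about the Legendre family $y^2=x(x-1)(x-\lambda)$. Take distinct $\alpha,\beta\in\C\setminus\{0,1\}$ with $\trdeg\Q(\alpha,\beta)=1$ that are $x$-coordinates of simultaneously torsion points on some $E_\lambda$. Then the absolutely irreducible $F\in\Z[a,b]$ with $F(\alpha,\beta)=0$ must divide one of the polynomials on Stoll's explicit list (see \cite{StollCurves}).

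The paper's proof is therefore a single relation computation:
\begin{enumerate}
\item Put $E_t$ in Legendre form. This needs full level-$2$ structure, which forces adjoining $c$ with $c^3=(1+t)/(1-t)$, since the other $2$-torsion abscissae are $v_i=\zeta_3^ic+\zeta_3^{-i}c^{-1}$. Your short Weierstrass model over $\Q(\zeta_{12})(t)$ does not suffice.
\item Use the fractional linear map $\psi$ with $2\mapsto\infty$, $v_1\mapsto 0$, $v_2\mapsto 1$, and compute $\alpha=\psi(1)$ and $\beta=\psi(-1)$ explicitly in terms of $c$.
\item Find $F(a,b)=2ab(a-3b)^2-(a-3b)(a^2-6ab-3b^2)+8b^2$ with $F(\alpha,\beta)=0$.
\item Check that $F$ is not in Stoll's list.
\end{enumerate}
To repair your proposal, replace the order-bound, resultant and mod-$p$ steps with this computation.
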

\begin{proof} Let $P_1(t) \colonequals (-1, 2t(2\zeta_3+1)) - (2,0)$, and $P_2(t) \colonequals (1,-2\zeta_4) - (2,,0)$. The proof of \Cref{lem:Ctshadowspushforward} shows that $D_1(t) = 20P_1(t)$, and $D_2(t) = 20P_2(t)$. We use \cite[Corollary~22]{Stoll} to check that the simultaneous torsion locus for the sections $D_1(t)$ and $D_2(t)$ is empty. For the Legendre family $E_{\lambda} \colon y^2=x(x-1)(x-\lambda)$, and for a pair $\alpha,\beta \in \C \setminus \{0,1\}$ with $\alpha \neq \beta$ and such that $\trdeg \Q(\alpha,\beta) = 1$, Stoll collects a list   of absolutely irreducible polynomials $F$ in $\Z[a,b]$ (see \cite{StollCurves}). If $\alpha$ and $\beta$ are $x$-coordinates of points that are simultaneously torsion on $E_{\lambda}$ for some $\lambda$, then $F(\alpha,\beta) = 0$ for some polynomial $F$ in his list. 

To apply his results, we transform the given equation $E_t \colon z^2=(v-2)h(v)$ to Legendre form and compute the $x$-coordinates $\alpha,\beta$ for the sections $P_2(t)$ and $P_1(t)$. It is then straightforward to check that $\trdeg \Q(\alpha,\beta) = 1$ and compute an absolutely irreducible $F \in \Z[a,b]$ such that $F(\alpha,\beta) = 0$. Since Stoll proves that such a polynomial $F$ divides one of the polynomials in his list of absolutely irreducible polynomials, it suffices to check that the $F$ that we compute is not in Stoll's list. This was indeed the case for the relevant triple $(\alpha,\beta, F)$ listed below coming from the sections $D_1(t),D_2(t)$ and this proves the lemma. 

%To describe $(\alpha,\beta,F)$, first define $\psi$ to be the fractional linear transformation defined by $2 \mapsto \infty, v_1 \mapsto 0, v_2 \mapsto 1$, where for $1 \leq i \leq 3$, we have $v_i \colonequals \zeta_3^i c + \frac{1}{c} \zeta_3^{-i}$ for an element $c$ satisfying that $c^3=(1+t)/(1-t)$. Then,

To find $(\alpha, \beta, F)$, first let $c \in \overline{\Q(t)}$ as before be an element such that $c^3=(1+t)/(1-t)$, and for $1 \leq i \leq 3$, let $v_i \colonequals \zeta_3^i c + \frac{1}{c} \zeta_3^{-i}$. If $\psi$ is the fractional linear transformation defined by $2 \mapsto \infty, v_1 \mapsto 0, v_2 \mapsto 1$, then
\[ \alpha = \psi(1)=\frac{(c+\zeta_3)(c-\zeta_3)^2}{(\zeta_3^2-\zeta_3)c(c-1)}, \quad \quad
\beta = \psi(-1) = \frac{(2\zeta_3+1)(c-\zeta_3)^3}{9c(c+1)}, \textup{ and,} \]
\[ F(a,b) \colonequals 2ab(a - 3b)^2 - (a - 3b)(a^2 - 6ab - 3b^2) + 8b^2 \in \Z[a,b].\qedhere\] 
\end{proof}

\begin{proof}[Proof of Theorem \ref{T:CerMM2}]
By \Cref{prop:rcsceresa}, it suffices to prove that $D_1(t)\times D_2(t)\in E_t\times E_t$ is nontorsion for every $t\in\C \setminus \{0,\pm 1\}.$ To that end, let $\mathcal{E}\rightarrow U\subset \mathbb{A}^1$ be the elliptic fibration with generic fiber $E_t$ and identity section $(v=2,z=0)$. Let $X$ be the image of the section $U\rightarrow \mathcal{E}\times\mathcal{E}$ such that $t\mapsto D_1(t)\times D_2(t)$. By Lemma \ref{lem:Ctshadowspushforward}, we see that $\overline{\Z X}=\mathcal{E}\times \mathcal{E}$. As $\dim X = 1$, we conclude by Theorem \ref{thm:RelManMum} and  Remark \ref{rem:MasserZannier} that the sections $D_1(t)$ and $D_2(t)$ are simultaneously torsion for at most finitely many values of $t\in \C \setminus \{0, \pm 1\}$. \Cref{lem:StollTors} then shows that this simultaneous torsion locus is infact empty.
\end{proof}

\subsection{Covers of a $2$-parameter \text{$S_3$} family.}
Recall the setup of Theorem~\ref{T:CerMM1} -- we have a $2$-parameter family of plane quartic genus $3$ curves cut out by 
\[ f_{u,w}(X,Y,Z) \colonequals w^2(X(Y^3+Z^3)+Y^2Z^2)+(u^3+w^4)X^2YZ+w^2u^3X^4=0, \]
and sections $A \colonequals [1:w:-w], A' \colonequals [1:-u:0]$ of the projection $p \colon V(f) \rightarrow \mathbb{A}^2_{(u,w)}$. Recall we have the subset $U$ of $\mathbb{A}^2_{u,w}$ that was the image of the smooth locus of $p$, a moduli map $U \rightarrow \mathcal{M}_{3,2}$ corresponding to $(p,A,A')$, the subvariety $S \subset \mathcal{M}_{3,2}$ the image of the moduli map, the  subvariety $\mathcal{H}$ that is an irreducible component of $\tar^{-1}(S)$. In this subsection, we prove that $\Cer(\widetilde{C})$ is nontorsion for $[\widetilde{C} \rightarrow C]$ in an open dense subset of $\mathcal{H}(\C)$. 

For this, first recall that the $S_3$ action on $\mathcal{C}$ gives rise to an isogeny decomposition of the Jacobian as $\mathcal{E} \times \mathcal{E} \times \mathcal{E}'$, where $\mathcal{E}$ factors are quotients by two of the involutions, and $\mathcal{E}'$ is the quotient by an order $3$ element in $S_3$ (see for example \cite[Table~2]{DecompJac}). The strategy is to show that the relative canonical shadow for the double cover generates an abelian scheme of dimension at least $3$ by showing that its projection to $\mathcal{E}^2$ and $\mathcal{E}'$ are both dominant, and that $\mathcal{E}$ and $\mathcal{E}'$ are nonisogenous. Since the Ceresa torsion locus is contained in the torsion locus for the relative canonical shadow, \Cref{T:CerMM1} follows from the relative Manin--Mumford conjecture.

For ease of notation, we write $t=(u,w)$, and let $\mathcal{C}_t, A_t,A'_t$ be the corresponding curve and marked points. The relative canonical divisor $K_\mathcal{C}$ for the fibration $\mathcal{C} \rightarrow U$ is given by any hyperplane section of $\mathbb{P}^2_U$. Let
$K_{\mathcal{C}}=\divi(X)=2[0:1:0]+2[0:0:1]$. Let $\pi_{\mathcal{E}} \colon\mathcal{E} \rightarrow \mathbb{A}^2_{\Q}$ and $\pi_{\mathcal{E}'} \colon \mathcal{E}' \rightarrow \mathbb{A}^2_{\Q}$ be genus $1$ fibrations in $\proj \Q[a,b,c] \times \Spec \Q[u,w]$ cut out by the equations
\[w^2(a^3-3abc+b^2c)+(u^3+w^4)bc^2+w^2u^3c^3 = 0, \text{ and,}\]
\[ w^2(a^2c+b^3+ab^2)+(u^3+w^4)abc+w^2u^3ac^2 = 0 \]  
respectively. These genus $1$ fibrations become elliptic with the choice of identity sections $[a:b:c]=[0:1:0]$ and $[a:b:c]=[1:0:0]$ for $\pi_{\mathcal{E}}$ and $\pi_{\mathcal{E}'}$ respectively.

Let $\zeta_3$ be a primitive third root of unity. The curves in the family $\mathcal{C}$ admit three non-commuting involutions $\sigma_i$ and quotient maps $\psi_i \colon \mathcal{C} \to \mathcal{E} \colonequals \mathcal{C}/\sigma_i$ for $i=0,1,2$ given by 
\begin{equation}\label{E:quot}
\sigma_i([X:Y:Z]) \colonequals [X:\zeta_3^i Z:\zeta_3^{2i} Y], \hspace{2mm }\psi_i([X:Y:Z]) \colonequals [X(\zeta_3^i Y+\zeta_3^{2i} Z):YZ:X^2]
\end{equation}
Under the above maps, the points with $X=0$ map to the identity $[0:1:0]$ in $\mathcal{E}$.
Moreover, $\mathcal{C}$ admits  an order $3$ automorphism $\tau$ with corresponding quotient map $\mathcal{C}\rightarrow \mathcal{E}^{\prime} \colonequals \mathcal{C}/\tau$:
\begin{equation}\label{E':quot}
\tau([X:Y:Z]):=([X:\zeta_3 Y:\zeta_3^2 Z]), \hspace{2mm} \psi_{\tau}([X:Y:Z]):=[Y^3:XYZ:X^3]   
\end{equation}

%For $t=(u,w)$, we let $A_{t} \colonequals [1:w:-w], A'_{t} \colonequals [1:-u:0]$ be two disjoint sections of the fibration $\pi_{\mathcal{C}}$. 
The main result of this section is the following  reformulation of Theorem~\ref{T:CerMM1}.

\vspace{2mm}

\begin{center}
     \begin{tikzcd}[row sep=18pt]
   &\mathcal{\widetilde{C}}_t \arrow[d,"\overset{\textup{branched above }}{A_t \text{ and }A'_t}","\pi"'] &\\
	&\mathcal{C}_t\arrow[dr,"\psi_\tau"]\arrow[d,"\psi_2"]\arrow[dl,"\psi_1"']&\\
\mathcal{E}_t:= \mathcal{C}_t/\langle{\sigma_2}\rangle &\mathcal{E}_t:= \mathcal{C}_t/\langle{\sigma_1}\rangle& \mathcal{E}_t':=\mathcal{C}_t/\langle{\tau}\rangle\\
     \end{tikzcd}
     \end{center}

Let $ \mathcal{\widetilde{\mathcal{C}}}_{\univ} \rightarrow \mathcal{H}_{2,2}(3)$ denote the universal family obtained by pulling back the universal family above $\mathcal{M}_6$ along the source map $\mathcal{H}_{2,2}(3) \rightarrow \mathcal{M}_6$. Recall that we have a moduli map $U \rightarrow S \subset \mathcal{M}_{3,2}$ corresponding to the family $\mathcal{C}$. Let $i \colon \mathcal{H} \rightarrow \mathcal{H}_{2,2}(3)$ be the inclusion of the irreducible component $\mathcal{H}$ of $\tar^{-1}(S)$. We will abuse notation and continue to denote the pullback of the universal family $\widetilde{\mathcal{C}}_{\univ} \rightarrow \mathcal{H}_{2,2}(3)$ by $i$ as $\widetilde{\mathcal{C}}_{\univ} \rightarrow \mathcal{H}$.

\renewcommand{\thmcermmpream}{Let $\mathcal{C}, A, A'$ be as above.  Let $ \widetilde{\mathcal{C}} \rightarrow U$ be the family of genus $6$ curves obtained by pulling back the composition $\widetilde{\mathcal{C}}_{\univ} \rightarrow \mathcal{H} \rightarrow S$ along the moduli map $U \rightarrow S$. }

\TCerMMone*

We first prove some lemmas. Let $\Pic^0(\mathcal{C}/U) \equalscolon \mathcal{J} \rightarrow U$ denote the  family of Jacobians of the family of curves $\mathcal{C} \rightarrow U$. We define $\pi_{*}(D_{\pi}) \colon U\rightarrow \mathcal{J}$ to be the section whose value at $t \in U$ is given by the pushforward of relative canonical shadow of $\pi_t \colon \widetilde{\mathcal{C}}_t\rightarrow \mathcal{C}_t$ to $\mathcal{J}_t$. For $i=0,1,2,$ let $P_i \colon U\rightarrow \mathcal{E}$ be the sections given by $(\psi_i)_*(\pi)_{*}(D_{\pi})$. Denote by $P_\tau \colon U\rightarrow \mathcal{E}^{\prime}$ the section given by $(\psi_\tau)_*(\pi_{*})(D_{\pi})$. The key to proving \Cref{T:CerMM1} is the following lemma.  
\begin{lem}\label{lem:sectionsindependent}
The elliptic fibrations $\pi_{\mathcal{E}}$ and $\pi_{\mathcal{E}^{\prime}}$ are geometrically non-isogenous. Furthermore, $P_1$ and $P_2$ are linearly independent sections of $\pi_{\mathcal{E}}$, while  $P_{\tau}$ is an  infinite order section of $\pi_{\mathcal{E}'}$.
\end{lem}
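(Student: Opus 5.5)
The proof has three parts, and I would settle each one by specializing to a single parameter value. Non-isogeny, linear independence of sections, and infinite order of a section are all properties that spread from one fiber to the generic fiber. If two elliptic fibrations are geometrically isogenous over the function field, then every specialization is isogenous too, so exhibiting one fiber pair that is not isogenous gives the generic statement. Likewise, if sections $P_1,P_2$ satisfy $mP_1+nP_2=0$ generically with $(m,n)\neq(0,0)$, then the same relation holds at every $t$. Hence finding one $t_0=(u_0,w_0)\in U(\Q)$ where $P_1(t_0),P_2(t_0)$ are independent on $\mathcal{E}_{t_0}$, and where $P_\tau(t_0)$ has infinite order on $\mathcal{E}'_{t_0}$, proves the second and third claims.

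\textbf{Non-isogeny.} I would compute the $j$-invariants of $\mathcal{E}$ and $\mathcal{E}'$ as rational functions in $(u,w)$ from the given plane cubic models, using the chosen identity sections $[0:1:0]$ and $[1:0:0]$.
\begin{itemize}
\item If one $j$-invariant is nonconstant and the other is constant, or both are nonconstant but one is not a modular-curve correspondence image of the other, then non-isogeny follows.
\item The simplest concrete check is by specialization. Pick $t_0$ with good reduction at some prime $p$ for both curves and compare point counts: $\#\mathcal{E}_{t_0}(\mathbb{F}_{p^k})\neq\#\mathcal{E}'_{t_0}(\mathbb{F}_{p^k})$ for suitable $p$ and $k$ shows the curves are not geometrically isogenous.
\item If $j(\mathcal{E}_{t_0})$ is non-CM, geometric isogenies are defined over a bounded extension, so traces of Frobenius at several primes, up to sign twists, suffice.
\item Alternatively, a $t_0$ where one fiber is CM and the other is not also does the job.
\end{itemize}

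\textbf{Computing the sections.} Since $\pi$ is a double cover of a genus $3$ curve branched at $A_t$ and $A'_t$, Lemma~\ref{lem:relcanpush} with $d=2$, $g_{C'}=3$, $\varphi_*R_\pi=A_t+A'_t$ gives
\[
\pi_*(D_\pi)=2\bigl(4(A_t+A'_t)-2K_{\mathcal{C}_t}\bigr),
\]
with $K_{\mathcal{C}}=2[0:1:0]+2[0:0:1]$. Push this forward under $\psi_i$ and $\psi_\tau$ using the explicit formulas \eqref{E:quot} and \eqref{E':quot}.
\begin{itemize}
\item The points with $X=0$ map to the identity of $\mathcal{E}$, so $(\psi_i)_*K_{\mathcal{C}}$ contributes $4O$. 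Hence $P_i=8(\psi_i(A_t)+\psi_i(A'_t))$ in $\Pic^0$, an explicit point in terms of $u,w$.
\item For $\psi_\tau$, I would check where $[0:1:0]$ and $[0:0:1]$ land, namely $[1:0:0]$, the identity, and $[0:0:1]$. If this is not the identity, I would correct by an explicit linear equivalence as in \eqref{eqn:linearequiv}.
\end{itemize}
I would then specialize to a rational $t_0$ and verify in Magma:
\begin{itemize}
\item the heights $\hat h(P_\tau(t_0))>0$, which gives infinite order;
\item the $2\times2$ Néron--Tate height regulator of $P_1(t_0),P_2(t_0)$ is nonzero, which gives independence.
\end{itemize}

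The main obstacle is the search for a good $t_0$. The specialization must be smooth, must have the two images of $A,A'$ defined over a small field (they are rational here, which helps), and must keep $P_1,P_2$ independent rather than accidentally dependent, e.g.\ related by the $S_3$ symmetry, since $\sigma_i$ permute the $\psi_j$. I would try several small integer pairs $(u_0,w_0)$ until the regulator is visibly nonzero.
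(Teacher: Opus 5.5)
Your proposal is correct and is essentially the paper's argument. The paper likewise computes $P_i = 8(\psi_i)_*(A_t+A'_t) - 4(\psi_i)_*(K_{\mathcal{C}_t})$ from \Cref{lem:relcanpush}, notes that non-isogeny, independence and non-torsion all follow from one good specialization, and verifies these in Magma at a single rational parameter ($(w,u)=(1,2)$). Two small corrections to your computational notes: $\psi_\tau$ sends $[0:0:1]$ to $[1:-1:0]$, not to $[0:0:1]$, so $(\psi_\tau)_*K_{\mathcal{C}_t} = 2[1:0:0]+2[1:-1:0]$, as in the paper's $\overline{P_\tau}$. Also, a single mismatch $\#\mathcal{E}_{t_0}(\mathbb{F}_{p^k})\neq\#\mathcal{E}'_{t_0}(\mathbb{F}_{p^k})$ does not by itself rule out a geometric isogeny, because quadratic twists give such mismatches; rely instead on your ``up to sign'' trace criterion (with a non-CM fiber) or compare Frobenius fields.
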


\begin{proof}
 Using Lemma \ref{lem:relcanpush} and the Riemann--Hurwitz formula for the maps $\psi_i$, we compute the expressions for $P_i$ for $i=1,2,\tau$, and  we get
\begin{equation}
 P_i= 8 (\psi_i)_*(A_{t}+A'_{t})-4(\psi_{i})_* (K_{\mathcal{C}_t}).
\end{equation}

It suffices to find a point $t \in U$ such that the specializations $\overline{P_1}$ and $\overline{P_2}$ of $P_1$ and $P_2$ at $t$ are linearly independent, the specialization $\overline{P_\tau}$ of $P_{\tau}$ at $t$ is nontorsion, and such that the specializations $E$ and $E'$ of the fibrations $\mathcal{E}$ and $\mathcal{E}'$ at $t$ are geometrically non-isogenous. We use code \cite{code} in Magma \cite{magma} to check these at $t=(w,u)=(1,2)$ using
\begin{align*}
\overline{P_1} &= 8[(\zeta_3-\zeta_3^2): -1:1]+8 [-2\zeta_3:0:-1]- 8\cdot[0:1:0]\\
\overline{P_2} &= 8[(\zeta_3^2-\zeta_3): -1:1]+8[-2\zeta_3^2:0:-1]- 8\cdot[0:1:0]\\
\overline{P_\tau} &= 8[1: -1:1]+8[-8:0:1]- 8\cdot[1:0:0]-8\cdot[1:-1:0], \qedhere 
\end{align*}
\end{proof}

\begin{cor}\label{cor:ZarClosDimension}
Consider the family of abelian schemes $\mathcal{E}^2\times \mathcal{E}'\rightarrow U$ of relative dimension $3$.  Let $X\subset \mathcal{E}^2\times \mathcal{E}'$ be the image of the section $P_1\times P_2\times P_{\tau} \colon U \rightarrow \mathcal{E}^2\times \mathcal{E}'$. Then $\overline{\Z X} = \mathcal{E}^2\times \mathcal{E}'$.
\end{cor}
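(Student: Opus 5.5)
The plan is to let $\mathcal{B} \colonequals \overline{\Z X}$ and show that it is all of $\mathcal{E}^2\times\mathcal{E}'$. Since $X$ is the image of a section and $\Z X$ is closed under addition and negation, $\mathcal{B}$ is a closed subgroup scheme of the abelian scheme $\mathcal{E}^2\times \mathcal{E}'$ over $U$. Following the strategy of \Cref{lem:secthroughpt}, it then suffices to prove that the fiber $\mathcal{B}_t$ is all of $\mathcal{E}_t^2\times \mathcal{E}'_t$ for a single point $t\in U$. Indeed, if so, then $\dim \mathcal{B} = \dim(\mathcal{E}^2\times\mathcal{E}')$, so $\mathcal{B}$ is a finite index closed subgroup variety. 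It is then open in the irreducible total space, hence everything. To get the fiberwise statement, note that $\mathcal{B}_t$ contains $\overline{\Z x_t}$, where $x_t\colonequals(\overline{P_1},\overline{P_2},\overline{P_\tau})$ is the specialization at $t$. So it is enough to show that the Zariski closure of the cyclic group generated by $x_t$ is all of $E^2\times E'$ at the specialization point $t=(w,u)=(1,2)$ used in \Cref{lem:sectionsindependent}.

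For that fiberwise claim, let $H$ be the identity component of $\overline{\Z x_t}$. This is an abelian subvariety of $E^2\times E'$, and some nonzero multiple $Nx_t$ lies in $H$. Suppose $H\neq E^2\times E'$. Since $E$ and $E'$ are non-isogenous by \Cref{lem:sectionsindependent}, we have $\mathrm{Hom}(E,E')=0$. It follows that every abelian subvariety of $E^2\times E'$ is a product $H_1\times H_2$ with $H_1\subset E^2$ and $H_2\subset E'$. This is the standard Poincaré/Goursat argument: the projection of $H$ to $E'$ and the kernel data give the splitting, because no nonzero homomorphism links $E^2$ and $E'$.

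It remains to show that each factor is full. If $H_2\neq E'$, then $H_2=0$, so $N\overline{P_\tau}=0$; this contradicts the fact that $\overline{P_\tau}$ is nontorsion. If $H_1\subsetneq E^2$, then $H_1$ is either $0$ or an elliptic curve in $E^2$. Any elliptic curve in $E^2$ is the image of $(\alpha,\beta)\colon E\to E^2$ for some $\alpha,\beta\in \mathrm{End}(E)$, or equivalently the kernel component of some nonzero $(\gamma,\delta)\colon E^2\to E$. In either case we get a nontrivial relation $\gamma(N\overline{P_1})+\delta(N\overline{P_2})=0$ with $\gamma,\delta\in\mathrm{End}(E)$ not both zero.

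The main obstacle is this last point. The statement ``linearly independent'' in \Cref{lem:sectionsindependent} must be read as independence over $\mathrm{End}(E)$, and not merely over $\Z$, in case $E$ has CM at the chosen specialization. I would first handle this by checking whether $E$ at $t=(1,2)$ has CM, via its $j$-invariant. If it does not, then $\mathrm{End}(E)=\Z$ and $\Z$-linear independence suffices. If it does, I would either interpret the Magma verification as independence over $\mathrm{End}(E)$, or argue generically: over the generic point of $U$, the elliptic fibration $\mathcal{E}$ has nonconstant $j$-invariant, so $\mathrm{End}=\Z$ for the generic fiber. The same dimension argument applied at the generic fiber would then give the result, using independence and non-torsion of the generic sections, which follow from the specialization checks.
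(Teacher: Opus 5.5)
\textbf{The gap is in the lift from one fiber to the whole family.} Your Goursat argument on $E^2\times E'$ is fine, but the reduction to a single fiber is not justified. You assert that $\mathcal{B}_t=E^2\times E'$ forces $\dim\mathcal{B}=\dim(\mathcal{E}^2\times\mathcal{E}')$, following the dimension count in Lemma~\ref{lem:secthroughpt}. Fiber dimension is only upper semicontinuous, though. Over the $2$-dimensional base $U$, a $4$-dimensional closed subset of the $5$-dimensional total space can dominate $U$ and still contain an entire special fiber. So a dimension count alone does not rule out $\overline{\Z X}\neq\mathcal{E}^2\times\mathcal{E}'$ while the fiber over $t$ is full. (The subgroup and finite-index remarks add nothing: once the dimensions agree, closedness in the irreducible total space already finishes.)

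What is true is the contrapositive, and proving it needs specialization of homomorphisms. Let $x_{\bar\eta}$ be the value of $P_1\times P_2\times P_\tau$ at a geometric generic point $\bar\eta$ of $U$, and suppose $\overline{\Z X}\neq\mathcal{E}^2\times\mathcal{E}'$.
\begin{enumerate}
\item The closure of $\Z x_{\bar\eta}$ is a proper algebraic subgroup of the geometric generic fiber, since otherwise $\overline{\Z X}$ would contain the dense generic fiber.
\item By Poincar\'e reducibility, some nonzero endomorphism $\phi$ of that fiber kills $Mx_{\bar\eta}$ for some $M\geq 1$.
\item Since $U$ is normal, $\phi$ extends over the normalization of $U$ in a finite extension of its function field.
\item By rigidity, $\phi$ stays nonzero on a fiber above $t$, and it kills $Mx_t$ there. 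This contradicts $\overline{\Z x_t}=E^2\times E'$.
\end{enumerate}
The simpler repair is to run your Goursat argument on the geometric generic fiber itself, where a full closure gives the corollary by density. That is the paper's proof. There, the specialization to $t=(w,u)=(1,2)$ from Lemma~\ref{lem:sectionsindependent} is used only in the easy direction: a generic $\Z$-relation, torsion relation or isogeny would specialize to $t$. Your ``argue generically'' fallback is essentially this route, and it needs no dimension argument.

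\textbf{The CM concern is legitimate and can be settled.} It matters for your fiberwise route. It is also tacitly needed in the paper's appeal to Remark~\ref{rem:MasserZannier}: there, $\Z$-independence gives $\overline{\Z(P_1\times P_2)(U)}=\mathcal{E}^2$ only if the geometric generic fiber has no extra endomorphisms. A short computation resolves it. At $(w,u)=(1,2)$, the fiber of $\mathcal{E}$ in the chart $c=1$ is $a^3-3ab+b^2+9b+8=0$. Completing the square in $b$ shows it is isomorphic to $y^2=x^3+189x+190$, with $j=3^9\cdot 7^3/(2^4\cdot 11\cdot 23)\notin\Z$. So $E$ has no CM, and $\Z$-independence of $\overline{P_1},\overline{P_2}$ suffices at $t$. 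Moreover, a CM geometric generic fiber would have constant $j$-invariant equal to an algebraic integer. Since $j(E_t)$ is not one, the generic fiber has no CM either.
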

\begin{proof} Let $\pi_{1} \colon \mathcal{E}^2\times \mathcal{E}' \rightarrow \mathcal{E}^2$ be the projection onto the first factor. The sections $P_1$ and $P_2$ are linearly independent by Lemma \ref{lem:sectionsindependent}. Remark \ref{rem:MasserZannier} then implies that $ \pi_1(\overline{\Z X}) = \overline{\Z (P_1 \times P_2)(U)} = \mathcal{E}^2$. Lemma \ref{lem:sectionsindependent} implies that $P_\tau$ is nontorsion and hence $\overline{\Z P_\tau(U)}=\mathcal{E}'$. Combining the last two sentences with the fact that $\pi_{\mathcal{E}}$ and $\pi_{\mathcal{E}'}$ are non-isogenous, we see that $\overline{\Z X}=\mathcal{E}^2\times \mathcal{E}'$.
\end{proof}

\begin{proof}[Proof of \Cref{T:CerMM1}] Since $\mathcal{H}$ is irreducible, any nonempty open subvariety of $\mathcal{H}$ is Zariski dense. Since $\tar \colon \mathcal{H} \rightarrow U$ is a finite flat morphism, the preimage of a nonempty open subset of $U$ is once again a nonempty open subset of $\mathcal{H}$. Combining the last two sentences with \Cref{prop:rcsceresa}, we see that it is enough to prove that the pushforward of the relative canonical shadow of $((\psi_1\times\psi_2\times\psi_\tau) \circ \pi)_*(D_{\pi}): U \rightarrow \mathcal{E}^2 \times \mathcal{E}'$ is nontorsion in $\mathcal{E}^2 \times \mathcal{E}'$ on an open dense subset of $U$. Let $X$ be as in Corollary~\ref{cor:ZarClosDimension}.
Then $\overline{\Z X} = \mathcal{E}^2\times \mathcal{E}'$ by Corollary~\ref{cor:ZarClosDimension}.   
As $\dim U=\dim X = 2<3 = \dim (\mathcal{E}^2\times \mathcal{E}')-\dim U$, by Theorem \ref{thm:RelManMum}, we conclude that $X(\C)\cap (\mathcal{E}^2\times \mathcal{E}')_{\tors}$ is a Zariski closed subset of $X$ of positive codimension. Since $X$ is irreducible, the complement of the torsion locus of $X(\C)$ is Zariski dense, or equivalently that the relative canonical shadow is nontorsion on an open dense subset of $U$. 
\end{proof}

\begin{rem}\label{rem:S3stratum}
In \cite[Proposition~5.3]{AriJef2}, Laga and Shnidman show that a very general point of the moduli space $\mathcal{M}_3^{S_3}$ of genus $3$ curves with an $S_3$ action has nontorsion Ceresa cycle. Since a curve dominating a curve with infinite order Ceresa cycle also has infinite order Ceresa cycle \cite[Proposition~25]{CerMod}, Laga and Shnidman's result implies that $\Cer(\widetilde{\mathcal{C}_t})$ is nontorsion for a very general $t \in U$. Although our proof technique does not directly tell us about the Ceresa cycles for the curves in the family $\mathcal{C}\rightarrow U$ (see Remark~\ref{rem:RCspecial}~\eqref{rem:GalE}), it allows us to prove a stronger finiteness statement for the curves appearing in the family of double covers $\widetilde{\mathcal{C}} \rightarrow U$. 
\end{rem} 
\printbibliography
\end{document}